\let\csname equation*\endcsname\relax
\let\csname endequation*\endcsname\relax
\newtheorem{lemma}{Lemma}[section]
\newtheorem{theorem}[lemma]{Theorem}
\newtheorem{proposition}[lemma]{Proposition}
\newtheorem{Assumption}[lemma]{Assumption}
\def\p{\partial}
\def\d{\delta}
\def\p{\partial}
\def\d{\delta}
\def\l{\langle}
\def\r{\rangle}
\def\N{\mathcal N}
\def\R{\mathcal R}
\def\D{\mathcal D}
\def\X{\mathcal X}
\def\Y{\mathcal Y}
\def\d{\delta}
\def\X{\mathcal X}
\def\Y{\mathcal Y}
\begin{document}
\jl{5}
\title[]
{An iteration regularizaion method with general convex penalty for nonlinear inverse problems in Banach spaces}

\author{Jing Wang$^1$, Wei Wang$^2$\begin{footnote} {Corresponding author}\end{footnote}, Bo Han$^3$}

\medskip

\address{$^1$School of Mathematical Sciences, Heilongjiang University, Harbin, Heilongjiang 150080, China (jingwangmath@hlju.edu.cn)}
\address{$^2$College of Mathematics, Physics and Information Engineering, Jiaxing University, Zhejiang 314001, China (weiwang\_math@126.com) }
\address{$^3$Department of Mathematics, Harbin Institute of Technology, Harbin, Heilongjiang 150001, China (bohan@hit.edu.cn)}




\begin{abstract}
In this paper, we discuss the construction, analysis and implementation of a novel iterative regularization scheme with general convex penalty term for nonlinear inverse problems in Banach spaces based on the homotopy perturbation technique, in an attempt to detect the special features of the sought solutions such as sparsity or piecewise constant.
By using tools from convex analysis in Banach spaces, we provide a detailed convergence and stability results for the presented algorithm.
Numerical simulations for one-dimensional and two-dimensional parameter identification problems are performed to validate that our approach is competitive in terms of reducing the overall computational time in comparison with the existing Landweber iteration with general convex penalty.
\end{abstract}




%
\oddsidemargin 15mm
\evensidemargin 15mm
\topmargin  0 pt


%
\section{Introduction}
In this paper, we will consider the nonlinear ill-posed operator equation
\begin{equation}\label{Nonlinear}
F(x) = y,
\end{equation}
where $F:D(F)\subset\mathcal{X}\rightarrow\mathcal{Y}$ is a nonlinear operator between the Banach spaces $\mathcal{X}$ and $\mathcal{Y}$
with norms $\|\cdot\|$, whose topological dual spaces are denoted by $\X^*$ and $\Y^*$, respectively.
Instead of the right hand side $y$, only a noisy data $y^\delta$ is available such that
\begin{equation}\label{noise}
\|y-y^\delta\|\leq\delta,
\end{equation}
with a small known noise level $\delta>0$.
Due to the ill-posedness of equation \eqref{Nonlinear}, a direct inversion of noise-contaminated data $y^\delta$ would not lead to a
meaningful solution.
Consequently, to find the stable and desired approximations of solutions of \eqref{Nonlinear}, we have to apply some regularization strategy.
Tikhonov regularization is certainly the most popular stabilization approach for solving nonlinear ill-posed problems \cite{Engl,Tikhonov1}
and  generalized in Banach spaces \cite{Banach}.
The minimization of Tikhonov-type functional is usually realized via optimization schemes, in which the good choice of the regularization parameter is crucial for the quality of the reconstructed solution, which often leads to increasing numerical stabilities and costs.
On the contrary, due to the straightforward implementation, iterative regularization methods \cite{Iterative} seem to be a promising and attractive alternative, in which the iterative steps plays the role of the regularization parameters.

Here the focus is on the generalization of the Landweber iteration method regarding Banach spaces.
For given parameter $r>1$, by making use of a gradient method for solving the minimization problem
\[
\min \frac{1}{r}\|F(x)-y^\delta\|^r,
\]
we therefore consider the following iteration
\begin{equation}\label{landr1}
\xi_{n+1} = \xi_{n} -\mu_{n}F'(x_{n})^* J_r^{\Y}(F(x_{n})-y^\d),
~~~~~~~~x_{n+1} = J_{s^*}^*(\xi_{n+1}),
\end{equation}
together with suitably chosen step length $\mu_n$, where $F'(x_{n})^*:\Y^*\to\X^*$ denotes the adjoint of $F'(x_{n})$, $J_r^{\Y}:\Y\to\Y^*$ and $J_{s^*}^{*}:\X^*\to\X$ denote the corresponding duality mapping with gauge function $t \mapsto t^{r-1}$ and $t \mapsto t^{s^*-1}$ respectively.
The choice of the parameter $s^*\in(1,2]$ is determined by the supposed smoothness of the dual space $\X^*$.
Starting with \cite{sls06} for linear problems, many publications have been concerned with an iteration of this type for nonlinear problems, see \cite{hk10,JinWang,kss09}.
In \cite{JinWang}, introducing a uniformly convex penalty $\Theta: \X\rightarrow(-\infty, \infty]$ chosen with desired features, the Landweber-type iteration with general convex penalty (named by LICP later) was proposed:
\begin{equation}\label{LICP}
\xi_{n+1}= \xi_{n} -\mu_{n}F'(x_{n})^* J_r^{\Y}(F(x_{n})-y^\d),~~~~~~
x_{n+1}= \nabla \Theta^*(\xi_{n+1}),
\end{equation}
where $\Theta^*:\X^*\to (-\infty, \infty]$  is the convex conjugate of $\Theta$ and $\nabla\Theta^*:\X^*\to \X$ denotes its gradient.
The function $\Theta$ can be chosen as the hybrid terms combining two very powerful features: $L^2+L^1$ function known to promote sparsity and $L^2+TV$ function  allowing for detecting the sharp edges.
Iteration \eqref{LICP} can be interpreted as linearized Bregman iteration see \cite{lsw14,yog08} in the linear case.
A general convergence analysis and regularization results on \eqref{LICP} is given in \cite{JinWang,ms16} under the termination with the discrepancy principle.

Homotopy perturbation iteration for nonlinear ill-posed problems in Hilbert spaces was first constructed by Li Cao, Bo Han and Wei Wang in \cite{Cao1,Cao2}.
Its essential idea is to introduce an embedding homotopy parameter and combine the traditional perturbation method with the homotopy technique.
Using the notation $T_n:=F'(x_{n})$, N-order homotopy perturbation iteration method can be formulated by
\begin{equation}\label{hpn}
 x_{n+1}=x_n-\sum_{j=1}^{N}(I- T_n^*T_n)^{j-1}T_n^*(F(x_n)-y^\delta).
\end{equation}
It is noteworthy that (\ref{hpn}) also can be explained as the $N$-steps classical Landweber iteration for solving the linearized problem \cite{jin11}: $F(x_n)+T_n(x-x_n)=y^\d.$
With the one-order approximation truncation ($N=1$), it can yield the classical Landweber iteration \cite{Hanke}:
\begin{equation}\label{Landweber}
x_{n+1}=x_{n}-T_n^{*}(F(x_{n})-y^\d).
\end{equation}
With the two-order approximation truncation ($N=2$), the homotopy perturbation iteration \cite{Cao1} can be obtained:
\begin{equation}\label{NewLandweber}
x_{n+1}=x_{n}-T_n^{*}(2I-T_nT_n^{*})(F(x_{n})-y^\d).
\end{equation}
It is shown that only half-time for \eqref{NewLandweber} is needed with the same accuracy compared with \eqref{Landweber}.
Subsequently, it was successfully applied to the well log constrained seismic waveform inversion \cite{Fu}.
Nevertheless, both \eqref{Landweber} and \eqref{NewLandweber} mainly restricted to the case of quadratic penalty terms may be no longer available for detecting the specific solutions with the discontinuity of sharp points or edges.

Inspired by the homotopy perturbation iteration in Hilbert space, in this paper we propose a novel iteration regularization method with general uniformly convex penalty for nonlinear inverse problems in Banach spaces.
The approach (homotopy perturbation iteration with general convex penalty, named by HPICP later) generalizes the two-order homotopy perturbation iteration \eqref{NewLandweber} in Banach spaces, in which the duality mappings
$J_r^{\Y}:\Y\to\Y^*, J_{s^*}^{*}:\X^*\to\X$ with $1<r,s^*<\infty$  are used and  general uniformly convex penalty $\Theta: \X\rightarrow(-\infty, \infty]$ is introduced.
For simplicity of exposition, we here write the notation $r_n := J_r^{\Y}(F(x_{n})-y^\d)$ and the proposed HPICP method can be formulated as follows :
\begin{equation}\label{HPICP}
\xi_{n+1}= \xi_{n} -\mu_n T_n^*\left(2r_n-\nu_nJ_{r}^{\Y}(T_nJ_{s^*}^{*}T_n^* r_n)\right),~~~~~~
x_{n+1}=\nabla \Theta^*(\xi_{n+1}).
\end{equation}
with  suitably chosen step length $\mu_n,\nu_n$.
In contrast to LICP method \cite{JinWang}, our proposed approach has the advantage of improving the calculation speed by strongly reducing the iteration numbers.
As an iterative regularization method, the discrepancy principle is used to terminate the iteration.
Due to the non-smooth convex penalty, which may include $L^1$ penalty or $TV$  penalty, the iteration can produce good results in applications,
where the sought solution is sparse or discontinuous. Moreover, iterative regularization in Banach spaces can be used to the non-Gaussian noisy data.
We expect that our method can become favorable by using other accelerated versions.

The outline of this paper is as follows.
In Section 2, we give some preliminary results from convex analysis in Banach spaces.
In Section 3,  we present the detailed convergence analysis and regularization results of our method combined with the discrepancy principle as stooping rule.
In section 4 we report some numerical simulations to test the performance of the method.
Finally, a short conclusion is drawn in Section 5.

\section{Preliminaries}
Throughout this paper $\X$ is supposed to be uniformly smooth and uniformly convex, hence it is reflexive and the dual $\X^{*}$ has the same properties \cite{Duality}.
Let $1<s,s^*<\infty$ denote conjugate exponents, i.e., $1/s+1/s^*=1$.
For any $x\in \X$ and $\xi\in \X^*$, we write $\langle x,\xi\rangle=\langle \xi,x\rangle$ for the duality pairing.
We use $A: \X \to \Y$ to denote a bounded linear operator and $A^*:\Y^*\to \X^*$ to denote its adjoint, i.e. $\langle A^*\varsigma,x\rangle=\langle \varsigma,Ax\rangle$ for any $x\in \X, \varsigma\in \Y^*$, and $\|A\|=\|A^*\|$ for the operator norm of $A$.
Let $\N(A)=\{x\in \X: A x=0\}$ be the null space of $A$, and let
$$\N(A)^\perp :=\{\xi\in \X: \l \xi, x\r =0, {\rm~for~all~} x\in \N(A)\}.$$
be the annihilator of $\N(A)$.

On a Banach space $\Y$, we consider the convex function $x\to \|x\|^r/r~(1<r<\infty)$.
Its subdifferential at $x$ is given by
$$J_r^\Y(x):=\left\{\xi\in \Y: \|\xi\|=\|x\|^{r-1}~{\rm and}~\l \xi, x\r=\|x\|^r\right\},$$
which gives the duality mapping $J_r^\Y: \Y \to 2^{\Y^*}$ of $\Y$ with gauge function $t\mapsto t^{r-1}$.
It is well known that the duality mapping $J_r^\Y$ ($1<r<\infty$) is single valued and uniformly continuous on bounded sets if $\Y$ is uniformly smooth.
It is an in general nonlinear set-value mapping.
For $s^*>1$ with $1/s+1/s^*=1$ we denote by $J_{s^*}^*:\X^*\to \X$ the duality mapping of the $\X^*$ with gauge function $t\mapsto t^{s^*-1}$.

Given a convex function $\Theta: \X \to (-\infty, \infty]$, we use $D(\Theta):=\{x\in \X: \Theta(x)<+\infty\}$ to denote its effective domain. It is called proper if $D(\Theta)\ne \emptyset$. The subdifferential of $\Theta$ at $x\in \X$ is defined as
\begin{equation}\label{SubD}
\p \Theta(x):=\{\xi\in\X^*: \Theta(z)-\Theta(x)-\l \xi, z-x\r \ge 0, \rm{~for~all~} z\in \X\}.
\end{equation}
The subdifferential mapping $\p\Theta:\X\to 2^{\X^*}$ is multi-valued and we set
$D(\p\Theta):=\{x\in D(\Theta): \p \Theta(x)\ne \emptyset\}.$

A proper convex function $\Theta: \X\to (-\infty, \infty]$ is called uniformly convex if there is a continuous increasing function $h: [0, \infty]\to [0, \infty]$, with the property that $h(t)=0$ implies $t=0$, such that
$$\Theta(\gamma \bar x+(1-\gamma)x) +\gamma (1-\gamma) h(\|\bar x-x\|) \le \gamma \Theta(\bar x) +(1-\gamma) \Theta(x).$$
for all $x, \bar x\in \X$ and all $\gamma\in [0,1]$.
If $h$ can be taken as $h(t)=c_0t^p$ for some $c_0>0$ and $p\ge 2$, then $\Theta$ is called $p$-convex.
Any uniformly convex mapping is strictly convex.

In the convex analysis, the Legendre-Fenchel conjugate is an important notation.
Given a proper, lower semi-continuous, convex function $\Theta: \X \to (-\infty, \infty]$, its Legendre-Fenchel conjugate $\Theta^*:\X^*\to[-\infty, \infty]$ is defined by $$\Theta^*(\xi):=\sup_{x\in \X} \left\{\l\xi, x\r -\Theta(x)\right\}, \quad \forall\xi\in \X^*.$$
It is well known that $\Theta^*$ is also proper, lower semi-continuous and convex.
And as an immediate consequence of the definition, we will have the following lemma.
\begin{lemma}
For arbitrary $x\in\X,\xi\in\X^*$, Young-Fenchel inequality holds as follows:
$$\l \xi, x\r \leq \Theta(x) +\Theta^*(\xi)$$
and
\begin{equation}\label{L-Fequal}
 \xi\in \p \Theta(x)  \Leftrightarrow x\in\p \Theta^*(\xi) \Leftrightarrow \l \xi, x\r=\Theta(x) +\Theta^*(\xi).
\end{equation}
\end{lemma}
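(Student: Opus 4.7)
The plan is to peel the lemma into three pieces: the Young-Fenchel inequality, and the two biconditionals. The inequality is an immediate unpacking of the definition of $\Theta^*$: for any fixed $x\in\X$ and $\xi\in\X^*$,
\[
\Theta^*(\xi)=\sup_{z\in\X}\bigl\{\langle \xi,z\rangle-\Theta(z)\bigr\}\geq \langle \xi,x\rangle-\Theta(x),
\]
which rearranges to $\langle \xi,x\rangle\leq\Theta(x)+\Theta^*(\xi)$. This also makes clear that equality holds iff $x$ attains the supremum defining $\Theta^*(\xi)$.

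For the first equivalence $\xi\in\partial\Theta(x)\Leftrightarrow \langle \xi,x\rangle=\Theta(x)+\Theta^*(\xi)$, I would argue both directions directly from the subdifferential definition \eqref{SubD}. If $\xi\in\partial\Theta(x)$, the inequality $\Theta(z)-\Theta(x)-\langle\xi,z-x\rangle\geq 0$ for all $z$ rearranges to $\langle\xi,z\rangle-\Theta(z)\leq\langle\xi,x\rangle-\Theta(x)$; taking the supremum over $z$ yields $\Theta^*(\xi)\leq\langle\xi,x\rangle-\Theta(x)$, and combined with Young-Fenchel this is an equality. Conversely, if $\langle\xi,x\rangle=\Theta(x)+\Theta^*(\xi)$, then for every $z$ the Young-Fenchel inequality $\langle\xi,z\rangle\leq\Theta(z)+\Theta^*(\xi)$ becomes, after substituting $\Theta^*(\xi)=\langle\xi,x\rangle-\Theta(x)$, exactly $\Theta(z)\geq\Theta(x)+\langle\xi,z-x\rangle$, which is the subgradient inequality.

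For the second equivalence $x\in\partial\Theta^*(\xi)\Leftrightarrow \langle\xi,x\rangle=\Theta(x)+\Theta^*(\xi)$, I would use symmetry together with biconjugacy. Applying the same argument as above but with $(\Theta^*,\xi,x)$ in place of $(\Theta,x,\xi)$ — legitimate because $\X$ is reflexive (as noted at the start of Section~2, uniform smoothness and uniform convexity of $\X$ yield reflexivity, so $\X^{**}$ is identified with $\X$ and $\partial\Theta^*(\xi)\subset\X$ — gives $x\in\partial\Theta^*(\xi)$ iff $\langle\xi,x\rangle=\Theta^*(\xi)+\Theta^{**}(x)$. The remaining ingredient is the Fenchel-Moreau theorem: since $\Theta$ is proper, lower semi-continuous, and convex, $\Theta^{**}=\Theta$ on $\X$. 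Substituting closes the loop.

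The genuinely nontrivial step is the appeal to $\Theta^{**}=\Theta$; everything else is a one-line manipulation. I would simply quote Fenchel-Moreau rather than reprove it, since it is standard in reflexive Banach spaces and the hypotheses on $\Theta$ (proper, lower semi-continuous, convex) are exactly what is assumed before the lemma statement.
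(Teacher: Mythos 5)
Your proof is correct, and the paper itself offers no proof at all: it states the lemma as ``an immediate consequence of the definition'' of the Legendre--Fenchel conjugate, and your argument is precisely the standard derivation that justifies that claim. The one point that is not purely definitional --- invoking Fenchel--Moreau ($\Theta^{**}=\Theta$ for proper, lower semi-continuous, convex $\Theta$) together with reflexivity of $\X$ to handle the equivalence involving $\p\Theta^*$ --- is correctly identified and correctly used.
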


We in Banach spaces introduce the Bregman distance with respect to convex function $\Theta$, which for any $x\in D(\p\Theta(x))$ and $\xi\in \p \Theta(x)$ is given by
$$D_\xi \Theta(z,x):=\Theta(z)-\Theta(x)-\l \xi, z-x\r, \qquad \forall z\in \X.$$
It is clear that the Bregman distance is non-negative and it holds $D_\xi \Theta(x,x)= 0$.
Bregman distance can be used to obtain important information under the Banach space norm when $\Theta$ has stronger convexity.

\begin{lemma}[\cite{Zalinescu},Corollary 3.5.11]
Let $\Theta: \X\to  (-\infty, \infty]$ is proper, lower semi-continuous and p-convex for some $p\geq2$. Then
 \begin{itemize}
   \item[1)]  there exists a constant $c_0>0$ such that
   \begin{equation}\label{pconv}
D_\xi \Theta(\bar x,x) \ge c_0 \|\bar x-x\|^p,~~~~\forall\bar x\in \X~{\rm and}~ \xi\in \p \Theta(x).
\end{equation}
   \item[2)]$D(\Theta^*)=\X^*$, $\Theta^*$ is Fr\'{e}chet differentiable and its gradient $\nabla \Theta^*: \X^*\to \X$ satisfies
\begin{equation}\label{thgradient}
\|\nabla \Theta^*(\xi_1)-\nabla \Theta^*(\xi_2) \|\le \left(\frac{\|\xi_1-\xi_2\|}{2c_0}\right)^{\frac{1}{p-1}},~~\forall\xi_1, \xi_2\in \X^*.
\end{equation}
 \end{itemize}
 \end{lemma}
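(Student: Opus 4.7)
The plan is to derive both parts directly from the $p$-convexity hypothesis by playing the definition of $p$-convexity against the subgradient inequality, and then transfer the resulting quantitative lower bound on the Bregman distance to the dual side via the Fenchel--Young relation \eqref{L-Fequal}.

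For part 1, I would start from the $p$-convexity inequality with $h(t)=c_0 t^p$ applied to the pair $(x,\bar x)$ and a convex combination parameter $\gamma\in(0,1)$, namely
\[
\Theta(x+\gamma(\bar x-x)) \le \gamma\Theta(\bar x)+(1-\gamma)\Theta(x)-\gamma(1-\gamma)c_0\|\bar x-x\|^p.
\]
Then I would combine this with the subgradient inequality at $x$, which gives
$\Theta(x+\gamma(\bar x-x))\ge \Theta(x)+\gamma\langle\xi,\bar x-x\rangle$.
Subtracting, dividing by $\gamma$, and letting $\gamma\to 0^+$ produces
$\langle\xi,\bar x-x\rangle\le \Theta(\bar x)-\Theta(x)-c_0\|\bar x-x\|^p$,
which rearranges to the desired lower bound $D_\xi\Theta(\bar x,x)\ge c_0\|\bar x-x\|^p$.

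For part 2, I would first show $D(\Theta^*)=\X^*$. Fix any $x_0\in D(\partial\Theta)$ with some $\xi_0\in\partial\Theta(x_0)$; using part 1, $\Theta(x)\ge \Theta(x_0)+\langle\xi_0,x-x_0\rangle+c_0\|x-x_0\|^p$. Then for arbitrary $\xi\in\X^*$,
\[
\langle\xi,x\rangle-\Theta(x)\le \|\xi-\xi_0\|\,\|x-x_0\|-c_0\|x-x_0\|^p+C,
\]
where $C$ is a constant depending only on $x_0,\xi_0,\xi$. Since $p\ge 2$, the right side is bounded above in $x$, so $\Theta^*(\xi)<\infty$. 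The standard coercivity plus weak lower semicontinuity argument (using reflexivity of $\X$) then yields an attained maximizer, which shows $\partial\Theta^*(\xi)\ne\emptyset$. Uniqueness of this maximizer follows from strict convexity of $\Theta$, itself an immediate consequence of part 1: if $x_1,x_2\in\partial\Theta^*(\xi)$, equivalently $\xi\in\partial\Theta(x_1)\cap\partial\Theta(x_2)$, then $D_\xi\Theta(x_2,x_1)\le 0$ by the subgradient inequality at $x_2$, while part 1 forces $D_\xi\Theta(x_2,x_1)\ge c_0\|x_1-x_2\|^p$, whence $x_1=x_2$. So $\partial\Theta^*$ is single-valued on $\X^*$.

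To obtain the quantitative estimate \eqref{thgradient}, let $x_i:=\nabla\Theta^*(\xi_i)$ so that $\xi_i\in\partial\Theta(x_i)$, and add the two Bregman inequalities from part 1 at the pairs $(x_2,x_1)$ and $(x_1,x_2)$. After cancellation this yields the symmetric estimate
\[
\langle\xi_1-\xi_2,x_1-x_2\rangle \ge 2c_0\|x_1-x_2\|^p.
\]
Bounding the left side by $\|\xi_1-\xi_2\|\,\|x_1-x_2\|$ and dividing through produces $\|x_1-x_2\|\le\bigl(\|\xi_1-\xi_2\|/(2c_0)\bigr)^{1/(p-1)}$, which is exactly the claimed Hölder bound. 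Finally, Fréchet differentiability of $\Theta^*$ follows from the fact that a convex function whose subdifferential is single-valued and norm-to-norm continuous is Fréchet differentiable with gradient equal to that subdifferential. The main obstacle in writing this out carefully is the last implication: it is not purely algebraic and relies on a standard convex-analysis lemma (attainment of the supremum in the Fenchel conjugate, together with upgrading Gâteaux differentiability to Fréchet differentiability via uniform continuity of the derivative on bounded sets). Once that step is granted, the rest of the proof is the short chain of manipulations above.
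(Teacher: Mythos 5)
Your argument is correct, but note that the paper does not prove this lemma at all: it is imported verbatim as Corollary 3.5.11 of Z\u{a}linescu's book, so there is no in-paper proof to match. What you have written is a self-contained derivation, and it holds up. Part 1 is the standard computation (apply the uniform-convexity inequality with $h(t)=c_0t^p$, subtract the subgradient inequality at $x$, divide by $\gamma$ and let $\gamma\to0^+$). In part 2, the coercivity estimate $\Theta(x)\ge\Theta(x_0)+\langle\xi_0,x-x_0\rangle+c_0\|x-x_0\|^p$ does give $D(\Theta^*)=\X^*$ and, with reflexivity of $\X$ (guaranteed here since $\X$ is assumed uniformly convex and smooth) and weak lower semicontinuity, attainment and hence $\partial\Theta^*(\xi)\ne\emptyset$; strict convexity from part 1 gives single-valuedness; and the symmetrized Bregman inequality $\langle\xi_1-\xi_2,x_1-x_2\rangle\ge2c_0\|x_1-x_2\|^p$ yields exactly \eqref{thgradient}. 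Two small points. First, you should say why a point $x_0\in D(\partial\Theta)$ exists at all: for a proper, lower semi-continuous convex $\Theta$ this follows from the Br{\o}ndsted--Rockafellar theorem (density of $D(\partial\Theta)$ in $D(\Theta)$). Second, the one step you flag as relying on an external lemma --- upgrading a single-valued, norm-to-norm continuous subdifferential to Fr\'echet differentiability --- can be made entirely elementary here: writing $x_\eta$ for the unique element of $\partial\Theta^*(\eta)$, the two subgradient inequalities give
\begin{equation*}
0\le\Theta^*(\xi+h)-\Theta^*(\xi)-\langle h,x_\xi\rangle\le\langle h,x_{\xi+h}-x_\xi\rangle\le\|h\|\left(\frac{\|h\|}{2c_0}\right)^{\frac{1}{p-1}}=o(\|h\|),
\end{equation*}
which is Fr\'echet differentiability with $\nabla\Theta^*(\xi)=x_\xi$ directly, with no appeal to the general differentiability criterion. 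With that substitution your proof is complete and arguably more informative than the paper's bare citation.
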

In addition, by the subdiffierential calculus there also holds
\begin{equation}\label{Eual}
x=\nabla \Theta^*(\xi) \Longleftrightarrow x =\arg \min_{x\in \X} \left\{ \Theta(x) -\l \xi, x\r\right\}.
\end{equation}

\section{The method and its convergence}
In this section we first formulate the novel iteration regularization method with the general uniformly convex penalty terms.
And then we present the detailed convergence analysis.
Throughout this section we will assume that $\X$ is $s$-convex for some $s\in[2,\infty)$, $(1/s+1/s^*=1)$ and $\Theta: \X\to (-\infty, \infty]$ is a proper, lower semi-continuous, p-convex function with $p\geq2$.
Assuming that $\Y$ is uniformly smooth Banach space so that the duality mapping $J_r^\Y:\Y \to \Y^*$ is single-valued and continuous for each $1<r<\infty$.
By picking $x_0\in \D(\partial\Theta)$ and $\xi_0\in\partial\Theta(x_0)$ as the initial guess, we define $x^\dag$ to be the solution of (\ref{Nonlinear}) with the property
\begin{equation}\label{xdag}
 D_{\xi_0} \Theta(x^{\dag},x_0) := \min_{x\in \D(\Theta)\cap \D }\left\{D_{\xi_0} \Theta(x,x_0) : F(x) = y\right\}.
\end{equation}
We are interested in developing algorithms to find the solution $x^\dag$ of (\ref{Nonlinear}).
We will need to impose the following conditions on the nonlinear operator $F$ where $B_\rho(x_0):=\{x\in \X: \|x-x_0\|\le \rho\}$.
\begin{Assumption}\label{A1}
\begin{itemize}
\item[(a)] There is $\rho > 0$ such that $B_{2\rho}(x_0)\subset \D(F)$ and (\ref{Nonlinear}) has a solution in $B_\rho(x_0)\cap \D(\Theta)$.

\item[(b)] Operator $F$ is weakly closed on $\D(F)$ and is Fr\'{e}chet differentiable on $B_{2\rho}(x_0)$, and  $F':x\to F'(x)$ is continuous on $B_{2\rho}(x_0)$.

\item[(c)] Fr\'{e}chet operator $F'$ is locally uniformly bounded so that  $$\|F'(x)\|\le B_0, \quad \forall x\in B_{2\rho}(x_0).$$

\item[(d)] There exists $0\le \eta<1$ such that the tangential cone condition holds
$$\|F(x)-F(\bar x) -F'(\bar x) (x-\bar x)\|\le \eta \|F(x) -F(\bar x)\|,\quad \forall x, \bar x \in B_{2\rho}(x_0).$$
\end{itemize}
\end{Assumption}
When $\X$ is a reflexive Banach space, by using the p-convexity and the weakly lower semi-continuity of $\Theta$ together with the weakly closedness of $F$, it is standard to show that $x^\dag$ exists.
The following result shows that $x^\dag$ is in fact uniquely defined, and more detailed proof can be seen in \cite{JinWang}.

\begin{lemma}\label{lemxdag}
Let $\X$ be reflexive and $F$ satisfy Assumption \ref{A1}.
If $x^\dag\in B_\rho(x_0)\cap \D(\Theta)$, then $x^\dag$ is the unique solution of (\ref{Nonlinear}) in $B_{2\rho}(x_0)\cap \D(\Theta)$ satisfying (\ref{xdag}).
\end{lemma}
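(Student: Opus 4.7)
The plan is to establish uniqueness by a strict-convexity argument along the segment joining two hypothetical minimizers of \eqref{xdag}, with the tangential cone condition of Assumption \ref{A1}(d) used to keep the whole segment inside the admissible set. So I would assume, for contradiction, the existence of a second point $\bar x\in B_{2\rho}(x_0)\cap\D(\Theta)$, distinct from $x^\dag$, also attaining the minimum in \eqref{xdag}. Writing $z_t:=(1-t)x^\dag+t\bar x$ for $t\in[0,1]$, convexity of the closed ball together with $\|x^\dag-x_0\|\le\rho$ and $\|\bar x-x_0\|\le 2\rho$ keeps $z_t\in B_{2\rho}(x_0)\subset\D(F)$, and convexity of $\D(\Theta)$ keeps $z_t\in\D(\Theta)$.

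The key step is to show $F(z_t)=y$ for every $t$. Applying Assumption \ref{A1}(d) to the pair $(\bar x,x^\dag)$ and using $F(x^\dag)=F(\bar x)=y$ gives $\|F'(x^\dag)(\bar x-x^\dag)\|\le\eta\cdot 0$, hence $F'(x^\dag)(\bar x-x^\dag)=0$. Since $z_t-x^\dag=t(\bar x-x^\dag)$, one has $F'(x^\dag)(z_t-x^\dag)=0$, and a second application of the tangential cone condition to the pair $(z_t,x^\dag)$ yields $\|F(z_t)-y\|\le\eta\|F(z_t)-y\|$, which forces $F(z_t)=y$ because $\eta<1$. Thus every $z_t$ is itself an admissible competitor in the minimization \eqref{xdag}.

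The final step is the strict-convexity argument. Because $\Theta$ is $p$-convex with $p\ge 2$, it is in particular strictly convex, and since the map $x\mapsto -\Theta(x_0)-\langle\xi_0,x-x_0\rangle$ is affine, for every $t\in(0,1)$ one obtains
\[
D_{\xi_0}\Theta(z_t,x_0)<(1-t)D_{\xi_0}\Theta(x^\dag,x_0)+t\,D_{\xi_0}\Theta(\bar x,x_0).
\]
Since $x^\dag$ and $\bar x$ both attain the common minimum value in \eqref{xdag}, the right-hand side equals that minimum, so $z_t$ would be a strictly better competitor, contradicting \eqref{xdag}. Hence $\bar x=x^\dag$.

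I expect the main obstacle to be the careful use of Assumption \ref{A1}(d) in showing that the entire segment $\{z_t:t\in[0,1]\}$ lies in the solution set of \eqref{Nonlinear}; once this convexification of the local solution set is in hand, strict convexity of $\Theta$ closes the argument in a single line. The hypothesis $x^\dag\in B_\rho(x_0)$, rather than merely $B_{2\rho}(x_0)$, is precisely the margin that lets the segment stay inside $B_{2\rho}(x_0)$, where the tangential cone estimate and Fr\'echet differentiability of $F$ are available.
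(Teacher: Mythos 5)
Your argument is correct and is essentially the proof the paper relies on: the paper itself only cites \cite{JinWang} for this lemma, and the proof there is exactly your two-step argument — use the tangential cone condition twice to show the segment $z_t$ consists of solutions of $F(x)=y$ inside $B_{2\rho}(x_0)$, then invoke strict convexity of $x\mapsto D_{\xi_0}\Theta(x,x_0)$ (which differs from $\Theta$ by an affine map) to rule out two distinct minimizers. Your remark that $x^\dag\in B_\rho(x_0)$ is what keeps the whole segment in $B_{2\rho}(x_0)$ is also the right reading of the hypotheses.
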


For the situation that the data contains noise, we may imitate \eqref{HPICP} to define an iterative sequence $\{(x_{n}^\d,\xi_{n}^\d)\}$ in $\X\times\X^*$.
For simplicity of the presentation we set $T_n^\d:=F'(x_{n}^\d)$ and $r_n^\d:= J_r^{\Y}(F(x_{n}^\d)-y^\d)$.
We denote the initial guess by $x_0^\d:=x_0\in \D(\partial\Theta)$ and $\xi_0^\d:=\xi_0\in\partial\Theta(x_0)$.
Once we have $\{(x_{n}^\d,\xi_{n}^\d)\}$, we may define $\{(x_{n+1}^\d,\xi_{n+1}^\d)\}$ by
\begin{equation}\label{HPICPdelta}
\left\{
\begin{array}{rl}
\xi_{n+1}^\d &= \xi_{n}^\d -\mu_{n}^\d {T_n^\d}^*\left(2r_n^\d-\nu_n^\d J_{r}^{\Y}(T_n^\d J_{s^*}^{*}{T_n^\d}^* r_n^\d)\right),\cr
x_{n+1}^\delta &= \arg \min\limits_{x\in \X} \left\{ \Theta(x) -\l \xi_{n+1}^\d, x\r\right\},
\end{array}
\right.
\end{equation}
with a proper choice of the step size $\mu_n^\d$.
Note that by using (\ref{Eual}) one can see that
$$x_{n+1}^\d =\arg \min\limits_{x\in \X} \left\{ \Theta(x) -\l \xi_{n+1}^\d, x\r\right\} \Longleftrightarrow x_{n+1}^\d=\nabla \Theta^*(\xi_{n+1}^\d),$$
which will be used in the forthcoming theoretical analysis.

In case of noisy data, the iteration procedure \eqref{HPICPdelta} has to be coupled with a stopping rule in order to act as a regularization method.
We will employ the discrepancy principle as a stopping rule, which determines the stopping index $n_\d=n_\d(\d,y^\d)$ by
$$\|F(x_{n_\d}^\d)-y^\d\| \leq \tau \d \leq \|F(x_{n}^\d)-y^\d\|,~~~~0\leq n<n_\d,$$
for some sufficiently large $\tau>0$, i.e., $x_{n_\d}^\d$ is the calculated approximate solution.

Next we will show that \eqref{HPICPdelta} has well convergence under the discrepancy principle.
In the following proposition we will first prove monotonicity of the errors.
\begin{proposition}[Error analysis]\label{L5.1}
Let Assumption \ref{A1} hold with $0\leq\eta<1$ and let $\Theta: \X\to  (-\infty, \infty]$ be a  proper, lower semi-continuous, p-convex function with $p\geq2$ satisfying (\ref{pconv}) for some $c_0>0$.
Assume that
\begin{equation}\label{5.11.1}
D_{\xi_0} \Theta (x^\dag, x_0) \le c_0 \rho^p.
\end{equation}
Let $\{(x_{n}^\d,\xi_{n}^\d)\}$ be defined by iteration (\ref{HPICPdelta}) with
\begin{eqnarray*}
\nu_{n}^\d &= \frac{\|r_n^\d\|}{\|J_{r}^{\Y}(T_n^\d J_{s^*}^{*}{T_n^\d}^* r_n^\d)\|}, \quad  \mu_n^\d= \mu_0 B_0^{-p}\|F(x_{n}^\d)-y^\d\|^{p-r},
\end{eqnarray*}
and the iteration is terminated by the discrepancy principle
\[
\|F(x_{n_\d}^\d)-y^\d\| \leq \tau \d \leq \|F(x_{n}^\d)-y^\d\|, \quad 0\leq n<n_\d
\]
with $\tau>1$ and $\mu_0>0$ satisfies
$$c_1:=1-3\eta-\frac{3(\eta+1)}{\tau}-3\left(\frac{3\mu_0}{2c_0}\right)^{\frac{1}{p-1}}>0.$$
Then $n_\d<\infty$ and $x_{n}^\d\in B_{2\rho}(x_0)$ for all $n\ge 0$.
Moreover, for any solution $\hat x$ of $F(x)=y$ in $B_{2\rho}(x_0)\cap D(\Theta)$ and all $n$ there hold
\begin{eqnarray}
&&D_{\xi_{n+1}^\d}\Theta(\hat x, x_{n+1}^\d)  \le D_{\xi_n^\d}\Theta(\hat x, x_n^\d),\label{eq5.10}\\
&& c_1 \mu_{n}^\d \|F(x_{n}^\d)-y^\d\|^r \le D_{\xi_n^\d}\Theta(\hat x, x_n^\d)
-D_{\xi_{n+1}^\d}\Theta(\hat x, x_{n+1}^\d). \label{eq5.11}
\end{eqnarray}
\end{proposition}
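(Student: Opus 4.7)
The plan is to reduce everything to the algebraic identity
\[
D_{\xi_n^\d}\Theta(\hat x, x_n^\d) - D_{\xi_{n+1}^\d}\Theta(\hat x, x_{n+1}^\d) = D_{\xi_n^\d}\Theta(x_{n+1}^\d, x_n^\d) + \l \xi_{n+1}^\d - \xi_n^\d, \hat x - x_{n+1}^\d\r,
\]
which follows by directly expanding the definition $D_\xi\Theta(z,x)=\Theta(z)-\Theta(x)-\l\xi,z-x\r$. Because the Bregman term on the right is nonnegative, it suffices to prove $\l \xi_{n+1}^\d - \xi_n^\d, \hat x - x_{n+1}^\d\r \ge c_1 \mu_n^\d \|F(x_n^\d) - y^\d\|^r$ in order to obtain (\ref{eq5.11}); monotonicity (\ref{eq5.10}) is then automatic from $c_1 > 0$. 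I would split $\hat x - x_{n+1}^\d = (\hat x - x_n^\d) + (x_n^\d - x_{n+1}^\d)$, substitute the HPICP update for $\xi_{n+1}^\d - \xi_n^\d$, and control each piece separately.

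For the principal term $\l \xi_{n+1}^\d - \xi_n^\d, \hat x - x_n^\d\r$, the tangential cone condition lets me write $T_n^\d(\hat x - x_n^\d) = -(F(x_n^\d) - y^\d) + e$ with $\|e\| \le \eta\|F(x_n^\d) - y^\d\| + (1+\eta)\d$, after which $\d \le \|F(x_n^\d) - y^\d\|/\tau$ for $n<n_\d$ by the discrepancy principle. The point that distinguishes HPICP from LICP and motivates the definition of $\nu_n^\d$ is that this normalization forces $\|\nu_n^\d J_r^{\Y}(T_n^\d J_{s^*}^{*}{T_n^\d}^* r_n^\d)\| = \|r_n^\d\| = \|F(x_n^\d)-y^\d\|^{r-1}$, so the total norm of $2r_n^\d - \nu_n^\d J_r^{\Y}(\cdots)$ is at most $3\|r_n^\d\|$. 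The dominant negative pairing $\l 2 r_n^\d, -(F(x_n^\d)-y^\d)\r = -2\|F(x_n^\d)-y^\d\|^r$, combined with Cauchy--Schwarz bounds for the remaining contributions, produces after multiplication by $-\mu_n^\d$ the lower bound $\mu_n^\d[1 - 3\eta - 3(1+\eta)/\tau]\|F(x_n^\d) - y^\d\|^r$.

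The cross term $\l \xi_{n+1}^\d - \xi_n^\d, x_n^\d - x_{n+1}^\d\r$ is where the geometry of $\X^*$ enters. Since $x_n^\d = \nabla\Theta^*(\xi_n^\d)$ by (\ref{Eual}), inequality (\ref{thgradient}) gives $\|x_n^\d - x_{n+1}^\d\| \le (\|\xi_{n+1}^\d - \xi_n^\d\|/(2c_0))^{1/(p-1)}$, while $\|\xi_{n+1}^\d - \xi_n^\d\| \le 3\mu_n^\d B_0 \|r_n^\d\|$ from $\|T_n^\d\| \le B_0$ and the $3\|r_n^\d\|$ estimate above. The specific choice $\mu_n^\d = \mu_0 B_0^{-p}\|F(x_n^\d) - y^\d\|^{p-r}$ is calibrated precisely so that the resulting product collapses to a constant multiple of $\|F(x_n^\d) - y^\d\|^r$, yielding the remaining $-3(3\mu_0/(2c_0))^{1/(p-1)}$ contribution in $c_1$. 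Tracking the exponents $r$, $p$, and $s^*$ through the several H\"older-type estimates and verifying that these cancellations close exactly is the most delicate part of the calculation.

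It remains to close the induction for $x_n^\d \in B_{2\rho}(x_0)$ and establish finite termination. Assuming $x_k^\d \in B_{2\rho}(x_0)$ for $k \le n$ validates Assumption \ref{A1} along the trajectory; the already-proven monotonicity applied with $\hat x = x^\dag$ combined with (\ref{5.11.1}) gives $D_{\xi_{n+1}^\d}\Theta(x^\dag, x_{n+1}^\d) \le c_0\rho^p$, and the $p$-convexity estimate (\ref{pconv}) then forces $\|x_{n+1}^\d - x^\dag\| \le \rho$, so $x_{n+1}^\d \in B_{2\rho}(x_0)$ by the triangle inequality. Finally, telescoping (\ref{eq5.11}) yields $\sum_{n=0}^{n_\d-1} c_1 \mu_0 B_0^{-p}\|F(x_n^\d) - y^\d\|^p \le D_{\xi_0}\Theta(\hat x, x_0)$, and each summand exceeds $c_1 \mu_0 B_0^{-p}(\tau\d)^p > 0$ prior to termination, so $n_\d$ must be finite.
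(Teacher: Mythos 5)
Your proposal is correct and follows essentially the same route as the paper's proof: the same treatment of the principal pairing via the tangential cone condition, the normalization $\|\nu_n^\d J_r^{\Y}(\cdot)\|=\|r_n^\d\|$ and the discrepancy principle, the same use of (\ref{thgradient}) with the calibration of $\mu_n^\d$ producing the identical constant $c_1$, and the same induction and telescoping arguments for $x_n^\d\in B_{2\rho}(x_0)$ and $n_\d<\infty$. The only cosmetic difference is that you reach the key intermediate inequality through the primal three-point Bregman identity, discarding the nonnegative term $D_{\xi_n^\d}\Theta(x_{n+1}^\d, x_n^\d)$ and bounding $\l \xi_{n+1}^\d-\xi_n^\d, x_n^\d-x_{n+1}^\d\r$ by Cauchy--Schwarz plus (\ref{thgradient}), whereas the paper passes to $\Theta^*$ via Fenchel duality and an integral representation; both yield the same bound $(2c_0)^{1-p^*}\|\xi_{n+1}^\d-\xi_n^\d\|^{p^*}$ and hence coincide from that point on.
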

\begin{proof}
From the definition of Bregman distance and together with using \eqref{L-Fequal}, it follows that
\begin{equation*}
\begin{aligned}
D_{\xi_{n+1}^\d}\Theta(\hat x,& x_{n+1}^\d) -D_{\xi_{n}^\d}\Theta(\hat x, x_{n}^\d)\\
&= \Theta(x_{n}^\d) -\Theta(x_{n+1}^\d) -\l \xi_{n+1}^\d, \hat x -x_{n+1}^\d\r + \l \xi_{n}^\d, \hat x -x_{n}^\d\r\\
&=\Theta^*(\xi_{n+1}^\d) -\Theta^*(\xi_{n}^\d) -\l \xi_{n+1}^\d- \xi_{n}^\d, \hat{x}\r.
\end{aligned}
\end{equation*}
By introducing $x_{n}^\d=\nabla \Theta^*(\xi_{n}^\d)$, we can write
\begin{equation*}
\begin{aligned}
D_{\xi_{n+1}^\d}\Theta(\hat x,& x_{n+1}^\d) -D_{\xi_{n}^\d}\Theta(\hat x, x_{n}^\d)\\
&= \Theta^*(\xi_{n+1}^\d) -\Theta^*(\xi_{n}^\d)  -\l \xi_{n+1}^\d-\xi_{n}^\d, \nabla \Theta^*(\xi_{n}^\d)\r
+\l \xi_{n+1}^\d-\xi_{n}^\d, x_{n}^\d-\hat{x}\r.
\end{aligned}
\end{equation*}
Since $\Theta$ is p-convex, we may use (\ref{thgradient}) to obtain
\begin{equation*}
\begin{aligned}
& \Theta^*(\xi_{n+1}^\d) -\Theta^*(\xi_{n}^\d) -\l \xi_{n+1}^\d-\xi_{n}^\d, \nabla \Theta^*(\xi_{n}^\d)\r\\
&= \int_0^1 \l \xi_{n+1}^\d -\xi_{n}^\d, \nabla \Theta^* (\xi_{n}^\d +t (\xi_{n+1}^\d-\xi_{n}^\d)) -\nabla \Theta^*(\xi_{n}^\d)\r d t\\
&\le \|\xi_{n+1}^\d-\xi_{n}^\d\| \int_0^1  \| \nabla \Theta^* (\xi_{n}^\d +t (\xi_{n+1}^\d-\xi_{n}^\d)) -\nabla \Theta^*(\xi_{n}^\d)\| dt\\
&\le (2c_0)^{1-p^*} \|\xi_{n+1}^\d-\xi_{n}^\d\|^{p^*},
\end{aligned}
\end{equation*}
where $p^*$ is the number conjugate to $p$, i.e., $1/p+1/p^*=1$.

By the definition of $\xi_{n+1}^\d$ in iteration (\ref{HPICPdelta}) we then have
\begin{equation}\label{1111}
\begin{aligned}
D_{\xi_{n+1}^\d}\Theta(\hat x, x_{n+1}^\d)-D_{\xi_{n}^\d}\Theta(\hat x, x_{n}^\d)\le (2 c_0)^{1-p^*}\|\xi_{n+1}^\d-\xi_{n}^\d\|^{p^*}
+\l \xi_{n+1}^\d-\xi_{n}^\d, x_{n}^\d-\hat{x}\r,
\end{aligned}
\end{equation}
where
\[
\xi_{n+1}^\d-\xi_{n}^\d = -\mu_{n}^\d {T_n^\d}^*\left(2 r_n^\d-\nu_n^\delta J_{r}^{\Y}({T_n^\d}J_{s^*}^{*}{T_n^\d}^* r_n^\d)\right).
\]
By virtue of the property of the duality mapping $J_r^{\Y}$, we have
\begin{equation}
\|J_{r}^{\Y}({T_n^\d}J_{s^*}^{*}{T_n^\d}^* r_n^\d)\|\leq \|{T_n^\d}J_{s^*}^{*}{T_n^\d}^*r_n^\d\|^{r-1} \quad {\rm and}\quad  \|r_n^\d\|\leq\|F(x_n^\d)-y^\d\|^{r-1} .  
\end{equation}
Moreover, according to the scaling condition in Assumption \ref{A1}(c) (i.e.,  $\|{T_n^\d}\|\leq B_0$), we have by taking $\nu_n^\d \leq  \frac{\|r_n^\d\|}{\|J_{r}^{\Y}({T_n^\d}J_{s^*}^{*}{T_n^\d}^* r_n^\d)\|}$ that
\begin{equation}
\nu_n^\delta \|{T_n^\d}^*J_{r}^{\Y}({T_n^\d}J_{s^*}^{*}{T_n^\d}^* r_n^\d)\|\leq B_0\|r_n^\d\|.
\end{equation}
Therefore,
\begin{equation*}
\begin{aligned}
\|\xi_{n+1}^\d-\xi_{n}^\d\|^{p^*}
= (\mu_{n}^\d )^{p^*}\|2{T_n^\d}^* r_n^\d-\nu_n^\delta {T_n^\d}^*J_{r}^{\Y}({T_n^\d}J_{s^*}^{*}{T_n^\d}^* r_n^\d)\|^{p^*}
\leq (3B_0)^{p^*}(\mu_{n}^\d )^{p^*}\|r_n^\d\|^{p^*}.
\end{aligned}
\end{equation*}
Furthermore, we estimate
\begin{equation*}
\begin{aligned}
\l \xi_{n+1}^\d-\xi_{n}^\d, x_{n}^\d-\hat{x}\r
 &= - \mu_n^\d \l 2 r_n^\d-\nu_n^\delta J_{r}^{\Y}({T_n^\d}J_{s^*}^{*}{T_n^\d}^* r_n^\d), {T_n^\d}(x_{n}^\d-\hat{x})\r   \\
 &= - 2\mu_n^\d \l r_n^\d, {T_n^\d}(x_{n}^\d-\hat{x})\r + \mu_n^\d \nu_n^\d\l J_r^{\Y}({T_n^\d}J_{s^*}^{*}{T_n^\d}^*r_n^\d),{T_n^\d}( x_{n}^\d-\hat{x})\r,
\end{aligned}
\end{equation*}
where
\begin{equation*}
\begin{aligned}
 -\l r_n^\d, -{T_n^\d}(\hat{x}-x_{n}^\d)\r
& =  -\l r_n^\d, F(\hat{x})-F(x_n^\d)-{T_n^\d}(\hat{x}-x_{n}^\d)\r - \l r_n^\d, F(x_n^\d)-y^\d+y^\d-y\r \\
& \leq \eta\|r_n^\d\|\|F(\hat{x})-F(x_n^\d)\| - \l r_n^\d, F(x_n^\d)-y^\d\r + \|r_n^\d\|\delta\\
 & \leq (1+\eta)\|F(x_n^\d)-y^\d\|^{r-1}\delta - (1-\eta) \|F(x_n^\d)-y^\d\|^r
\end{aligned}
\end{equation*}
and
\begin{equation*}
\begin{aligned}
\nu_n^\d\l J_r^{\Y}({T_n^\d}J_{s^*}^{*}{T_n^\d}^*r_n^\d),{T_n^\d}( x_{n}^\d-\hat{x})\r
&\leq \nu_n^\d\|J_r^{\Y}({T_n^\d}J_{s^*}^{*}{T_n^\d}^*r_n^\d)\|\|{T_n^\d}( x_{n}^\d-\hat{x})\|\\
&\leq(1+\eta) \|F(x_n^\d)-y^\d\|^{r-1} \|F(\hat{x})-F(x_n^\d)\|\\
&\leq(1+\eta) \|F(x_n^\d)-y^\d\|^{r-1} (\d+\|F(x_n^\d)-y^\d\|).
\end{aligned}
\end{equation*}
Hence, by the stopping rule we have
\begin{equation*}
\begin{aligned}
\l \xi_{n+1}^\d-\xi_{n}^\d, x_{n}^\d-\hat{x}\r
& \leq -\mu_n^\d (1-3\eta) \|F(x_n^\d)-y^\d\|^{r} +3(1+\eta)\mu_n^\d \|F(x_n^\d)-y^\d\|^{r-1}\delta\\
& \leq  -\mu_n^\d (1-3\eta-3(1+\eta)/\tau) \|F(x_n^\d)-y^\d\|^{r}.
\end{aligned}
\end{equation*}
In addition, by the definition of $\mu_{n}^\d$ it is easy to see that
\begin{eqnarray*}
&B_0^{p^*}\left(\mu_{n}^\d\right)^{p^*-1} \|F(x_{n}^\d)-y^\d\|^{p^*(r-1)} \le \mu_0^{p^*-1} \|F(x_{n}^\d)-y^\d\|^r.
\end{eqnarray*}
Then combining with these two inequalities with (\ref{1111}), we thus obtain
\begin{equation}
\begin{aligned}
&D_{\xi_{n+1}^\d} \Theta(\hat x, x_{n+1}^\d)-D_{\xi_{n}^\d}\Theta(\hat x, x_{n}^\d)\nonumber\\
&\leq (2 c_0)^{1-p^*}(3B_0)^{p^*}(\mu_{n}^\d )^{p^*}\| F(x_{n}^\d)-y^\d\|^{p^*(r-1)}
-\mu_n^\d \left(1-3\eta-\frac{3(1+\eta)}{\tau}\right) \|F(x_n^\d)-y^\d\|^{r}\\
&\leq 3^{p^*}(2 c_0)^{1-p^*}\mu_{n}^\d(\mu_0 )^{p^*-1}\| F(x_{n}^\d)-y^\d\|^{r}
-\mu_n^\d \left(1-3\eta-\frac{3(1+\eta)}{\tau}\right) \|F(x_n^\d)-y^\d\|^{r}\\
&\le - c_1\mu_{n}^\d \|F(x_{n}^\d)-y^\d\|^r,
\end{aligned}
\end{equation}
i.e., the error is decreasing.
To show $x_{n+1}^\d \in B_{2\rho}(x_0)$, we first use the above inequality with $\hat x =x^\dag$ and (\ref{5.11.1}) to obtain
\begin{equation*}
D_{\xi_{n+1}^\d}\Theta(x^\dag, x_{n+1}^\d) \le D_{\xi_0}\Theta(x^\dag, x_0) \le c_0 \rho^p.
\end{equation*}
In view of (\ref{pconv}), we then have  $\|x_{n+1}^\d-x^\dag\|\le \rho$ and $\|x^\dag-x_0\|\le \rho$.
Consequently $x_{n+1}^\d\in B_{2\rho}(x_0)$.

We next show $n_\d<\infty$.
According to the definition of $n_\d$, for any $n<n_\d$ such that $\|F(x_{n}^\d)-y^\d\|>\tau \d$.
Then there holds
$$\mu_{n}^\d =\mu_0 B_0^{-p} \|F(x_{n}^\d)-y^\d\|^{p-r},$$
and
\begin{equation*}
\sum_{n=0}^{n_\d} \mu_{n}^\d \|F(x_{n}^\d)-y^\d\|^r \ge \mu_0 B_0^{-p} \|F(x_{n}^\d)-y^\d\|^p
>\mu_0 B_0^{-p}(\tau\d)^p.
\end{equation*}
By summing (\ref{eq5.11}) over $n$ from $n=0$ to $n=m$ for any $m<n_\d$ and using the above inequality we obtain
$$c_1 \mu_0 B_0^{-p}(\tau\d)^p (m+1) \le D_{\xi_0}\Theta(\hat x, x_0).$$
Since this is true for any $m<n_\d$, it follows that $n_\d<\infty$.
\end{proof}
When the iteration (\ref{HPICP}) is applied to the exact data, i.e., using $y$ instead of $y^\d$ in (\ref{HPICP}), we will drop the superscript $\d$ in all the quantities involved, for instance, we will write $\xi_n^\d$ as $\xi_n$, $x_n^\d$ as
$x_n$, and so on.
Observing that
\begin{equation*}
\mu_{n} \|F(x_{n})-y\|^r =\mu_0 B_0^{-p}\|F(x_{n})-y\|^p.
\end{equation*}
The proof of Proposition \ref{L5.1} in fact shows that, under Assumption \ref{A1}, if
$$c_2:=1-3\eta-3\left(\frac{3\mu_0}{2c_0}\right)^{\frac{1}{p-1}}>0,$$
then
$$x_{n}\in B_{2\rho}(x_0) \qquad \forall n\ge 0,$$
and for any solution $\hat x$ of (\ref{Nonlinear}) in $B_{2\rho}(x_0)\cap D(\Theta)$ and all n there hold
\begin{eqnarray}
&&D_{\xi_{n+1}}\Theta(\hat x, x_{n+1})  \le D_{\xi_n}\Theta(\hat x, x_n),\label{eq5.10.1}\\
&&c_2\mu_0 B_0^{-p} \|F(x_{n})-y\|^p  \le D_{\xi_n}\Theta(\hat x, x_n)
-D_{\xi_{n+1}}\Theta(\hat x, x_{n+1}). \label{eq5.11.1}
\end{eqnarray}
These two inequalities imply immediately that
\begin{equation}\label{eq5.12.1}
\lim_{n\rightarrow \infty}\|F(x_{n})-y\|^p =0.
\end{equation}

As the first step toward the proof of convergence on $x_{n_\d}^\d$, we need to derive some convergence results on the sequences $\{x_n\}$ and $\{\xi_n\}$. This will be achieved by the following proposition which gives a general convergence criterion on any sequences $\{x_n\}\subset\X$ and $\{\xi_n\}\subset\X^*$ satisfying certain conditions.

\begin{lemma}\label{L5.21}
Let all the conditions in Proposition \ref{L5.1} hold.
For the sequences $\{\xi_n\}$ and $\{x_n\}$ defined by iteration (\ref{HPICP}) with exact data, there exists a solution $x_*\in B_{2\rho}(x_0)\cap D(\Theta)$ of (\ref{Nonlinear}) such that
$$\lim_{n\rightarrow \infty} \|x_n-x_*\|=0 ~~~ \text{and }~~ \lim_{n\rightarrow \infty} D_{\xi_n}\Theta(x_*, x_n)=0.$$
If in addition $\N(F'(x^\dag))\subset \N(F'(x))$ for all $x\in B_{2\rho}(x_0)$, then $x_*=x^\dag$.
\end{lemma}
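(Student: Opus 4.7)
The plan is to derive convergence via a three-point Bregman identity combined with a careful cross-term estimate, and then identify $x_*$ with $x^\dag$ under the null-space assumption by an orthogonality argument. Throughout I use the exact-data monotonicity and summability consequences extracted in Proposition \ref{L5.1}.

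For any solution $\hat x \in B_{2\rho}(x_0)\cap D(\Theta)$ and indices $m\ge n$, the Legendre-Fenchel equality (\ref{L-Fequal}) yields the three-point identity
\[
D_{\xi_m}\Theta(x_n, x_m) = D_{\xi_m}\Theta(\hat x, x_m) - D_{\xi_n}\Theta(\hat x, x_n) + \langle \xi_m - \xi_n, \hat x - x_n\rangle.
\]
By the monotone decrease (\ref{eq5.10.1}) both $D_{\xi_n}\Theta(\hat x, x_n)$ and $D_{\xi_m}\Theta(\hat x, x_m)$ tend to the same limit, so their difference vanishes as $n\le m\to\infty$. The main obstacle is controlling the cross term. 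Telescoping via the iteration and transferring the adjoints gives
\[
\langle \xi_m - \xi_n, \hat x - x_n\rangle = -\sum_{k=n}^{m-1}\mu_k\,\bigl\langle 2r_k - \nu_k J_r^{\Y}(T_k J_{s^*}^* T_k^* r_k),\, T_k(\hat x - x_n)\bigr\rangle.
\]
The choice of $\nu_k$ bounds the first factor in norm by $3\|r_k\| = 3\|F(x_k)-y\|^{r-1}$, while decomposing $T_k(\hat x - x_n) = T_k(\hat x - x_k) + T_k(x_k - x_n)$ and applying the tangential cone condition from Assumption \ref{A1}(d) twice yields $\|T_k(\hat x - x_n)\|\le (1+\eta)\bigl(2\|F(x_k)-y\|+\|F(x_n)-y\|\bigr)$. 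The resulting sum is then controlled by the convergent series $\sum_k\mu_k\|F(x_k)-y\|^r$ obtained by telescoping (\ref{eq5.11.1}), combined with the decay $\|F(x_n)-y\|\to 0$ from (\ref{eq5.12.1}) and a Young-type inequality to absorb the mixed product; I expect this cross-term estimate to be the most delicate computation.

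Once the cross term is shown to vanish, $D_{\xi_m}\Theta(x_n,x_m)\to 0$, and the $p$-convexity estimate (\ref{pconv}) forces $\|x_n - x_m\|^p\le c_0^{-1}D_{\xi_m}\Theta(x_n,x_m)\to 0$; thus $\{x_n\}$ is Cauchy and $x_n\to x_*$ for some $x_*\in B_{2\rho}(x_0)$. Continuity of $F$ with $\|F(x_n)-y\|\to 0$ yields $F(x_*)=y$, and lower semicontinuity of $\Theta$ combined with $\xi_n\in\partial\Theta(x_n)$ and the convergence $\langle \xi_n, x_* - x_n\rangle \to 0$ places $x_*\in D(\Theta)$ and $\Theta(x_n)\to\Theta(x_*)$. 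Since $x_*$ is a solution lying in $B_{2\rho}(x_0)\cap D(\Theta)$, applying the same three-point identity with $\hat x = x_*$ and invoking the same cross-term estimate forces the common limit $\lim_n D_{\xi_n}\Theta(x_*,x_n)=0$.

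For the identification $x_* = x^\dag$ under $\N(F'(x^\dag)) \subset \N(F'(x))$: each increment $\xi_{n+1}-\xi_n\in \R(T_n^*) \subset \N(T_n)^\perp\subset \N(F'(x^\dag))^\perp$, so the telescoping sum $\xi_n-\xi_0$ lies in this annihilator for every $n$. Applying the tangential cone condition at $F(x^\dag)=F(x_*)=y$ gives $F'(x^\dag)(x^\dag - x_*)=0$, i.e., $x^\dag - x_*\in \N(F'(x^\dag))$, hence $\langle \xi_n-\xi_0, x^\dag - x_*\rangle=0$ for every $n$. Expanding both Bregman distances then produces the $n$-independent identity
\[
D_{\xi_n}\Theta(x^\dag,x_n) - D_{\xi_n}\Theta(x_*,x_n) = D_{\xi_0}\Theta(x^\dag,x_0) - D_{\xi_0}\Theta(x_*,x_0).
\]
Letting $n\to\infty$ and using $D_{\xi_n}\Theta(x_*,x_n)\to 0$ together with the nonnegativity of $D_{\xi_n}\Theta(x^\dag,x_n)$ yields $D_{\xi_0}\Theta(x^\dag,x_0)\ge D_{\xi_0}\Theta(x_*,x_0)$; the reverse inequality is the defining minimality of $x^\dag$ from Lemma \ref{lemxdag}, so equality holds and uniqueness of the minimizer in $B_{2\rho}(x_0)\cap D(\Theta)$ forces $x_*=x^\dag$.
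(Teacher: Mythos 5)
Your overall architecture (three-point Bregman identity plus cross-term control, then an annihilator argument for the identification) parallels the paper's, but the convergence half has a genuine gap at exactly the place you flagged as delicate: the cross-term estimate does not close in the form you set it up. Your identity pairs every increment $\xi_{k+1}-\xi_k$ ($n\le k<m$) with the fixed vector $\hat x - x_n$ anchored at the \emph{earlier} index, so the tangential cone condition gives $\|T_k(\hat x - x_n)\|\le(1+\eta)\bigl(2\|F(x_k)-y\|+\|F(x_n)-y\|\bigr)$ and the mixed product you must absorb is
\[
\|F(x_n)-y\|\sum_{k=n}^{m-1}\mu_k\|F(x_k)-y\|^{r-1}=\mu_0B_0^{-p}\,\|F(x_n)-y\|\sum_{k=n}^{m-1}\|F(x_k)-y\|^{p-1}.
\]
The only summability available from (\ref{eq5.11.1}) is $\sum_k\|F(x_k)-y\|^p<\infty$, which does not control $\sum_k\|F(x_k)-y\|^{p-1}$; your proposed Young-type inequality turns the product into $\frac{p-1}{p}\|F(x_k)-y\|^p+\frac{1}{p}\|F(x_n)-y\|^p$, and summing the second term over $k$ produces a factor $(m-n)$ that diverges. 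Nor can one arrange a forward-looking record condition $\|F(x_n)-y\|\le\|F(x_k)-y\|$ for all $k\ge n$, since the residuals tend to zero. The missing idea is the monotone-residual subsequence: set $n_0=0$ and let $n_k$ be the first index $>n_{k-1}$ with $R_{n_k}\le R_{n_{k-1}}$ (so that $R_{n_k}\le R_n$ for all $n<n_k$), and orient the three-point identity the other way, $D_{\xi_{n_l}}\Theta(x_{n_k},x_{n_l})=D_{\xi_{n_l}}\Theta(\hat x,x_{n_l})-D_{\xi_{n_k}}\Theta(\hat x,x_{n_k})+\langle\xi_{n_k}-\xi_{n_l},x_{n_k}-\hat x\rangle$, so each increment is paired with the \emph{later}, record-low iterate; then $\|T_n(x_{n_k}-\hat x)\|\le 3(1+\eta)\|F(x_n)-y\|$ and the entire cross term is dominated by $C\bigl(D_{\xi_{n_l}}\Theta(\hat x,x_{n_l})-D_{\xi_{n_k}}\Theta(\hat x,x_{n_k})\bigr)$. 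This yields Cauchyness only of $\{x_{n_k}\}$; the full-sequence statements then follow from the monotonicity of $D_{\xi_n}\Theta(x_*,x_n)$ together with $p$-convexity, not from $\{x_n\}$ being Cauchy outright.

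Your identification step, by contrast, is correct and is genuinely simpler than the paper's. You note $\xi_{n+1}-\xi_n\in\R(T_n^*)\subset\N(T_n)^\perp\subset\N(F'(x^\dag))^\perp$ and $x^\dag-x_*\in\N(F'(x^\dag))$ (from the cone condition once $F(x_*)=y$ is known), so $\langle\xi_n-\xi_0,x^\dag-x_*\rangle=0$ exactly, which gives the $n$-independent identity and $D_{\xi_0}\Theta(x_*,x_0)\le D_{\xi_0}\Theta(x^\dag,x_0)$ directly. The paper instead works with $\xi_{n+1}-\xi_n\in\overline{\R(F'(x^\dag)^*)}$ and an $\varepsilon$-approximation $\xi_{n+1}-\xi_n=F'(x^\dag)^*v_n+\beta_n$; your exact-annihilation argument avoids that bookkeeping, but it presupposes $x_n\to x_*$, which returns you to the gap above.
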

\begin{proof}
We first show that there is a strictly increasing subsequence $\{n_k\}$ of integers such
that $\{x_{n_k}\}$ is convergent. To this end, let $$R_n:= \|y-F(x_{n})\|^p.$$
It follows from (\ref{eq5.12.1}) that
\begin{equation}\label{eq5.1}
\lim_{n\rightarrow \infty} R_n =0.
\end{equation}
Moreover, if $R_n=0$ for some $n$, then $y=F(x_{n})$.
Consequently it follows from the definition of the method that $x_{m}=x_n$  for all $m\geq n$.
Therefore
\begin{equation}\label{eq5.2}
R_n=0  \Longrightarrow R_m=0~\text{for~all}~m\ge n.
\end{equation}
In view of (\ref{eq5.1}) and (\ref{eq5.2}), we can introduce a subsequence $\{n_k\}$ by setting $n_0=0$ and letting $n_k$, for each $k\ge 1$, be the first integer satisfying
$$n_k\ge n_{k-1}+1 ~~\text{and}~~ R_{n_k} \le R_{n_{k-1}}.$$
For such chosen strictly increasing sequence $\{n_k\}$ we have
\begin{equation}\label{eq5.3}
R_{n_k} \le R_n, \qquad 0\le n<n_k.
\end{equation}
Now for any $l < k$, we consider $D_{\xi_{n_l}}\Theta(x_{n_k}, x_{n_l})$ as
\begin{equation}\label{2.28}
D_{\xi_{n_l}}\Theta(x_{n_k}, x_{n_l}) = D_{\xi_{n_l}}\Theta(\hat{x}, x_{n_l})-D_{\xi_{n_k}}\Theta(\hat{x}, x_{n_k})
+ \l \xi_{n_k}-\xi_{n_l}, x_{n_k}-\hat x\r.
\end{equation}
where
\begin{equation}\label{eq5.3}
  \begin{aligned}
|\l \xi_{n_k}-\xi_{n_l}, x_{n_k}-\hat x\r|
&= \left|\sum_{n=n_l}^{n_k-1}\l \xi_{n+1}-\xi_{n}, x_{n_k}-\hat x\r\right|\\
&= \left|\sum_{n=n_l}^{n_k-1}\mu_{n} \l  2 r_n-\nu_n J_{r}^{\Y}(T_nJ_{s^*}^{*}T_n^* r_n), T_n(x_{n_k}-\hat x)\r \right|
  \end{aligned}
\end{equation}
Using the nonlinear condition on $F$ it is easy to obtain
\begin{equation*}
\begin{aligned}
 \|T_n(x_{n_k}-\hat x)\| &\le \|T_n(x_{n}-\hat x)\| +\|T_n(x_{n_k}-x_{n})\|\\
& \le (1+\eta) \left(\|F(x_{n})-y\|+ \|F(x_{n_k})-F(x_{n})\|\right)\\
&\le (1+\eta) \left(2 \|F(x_{n})-y\|+\|F(x_{n_k})-y\|\right).
\end{aligned}
\end{equation*}
Therefore, by using the property of the duality mapping $J_r^{\Y}$, we have
\begin{equation*}\label{eq5.6}
\begin{aligned}
|\l \xi_{n+1}-\xi_n, x_{n_k}-\hat x\r|
&\leq\mu_{n} |\l2 r_n-\nu_n^\delta J_{r}^{\Y}(T_nJ_{s^*}^{*}T_n^* r_n), T_n(x_{n_k}-\hat x)\r|\\
&\le 2\mu_{n} \|r_n\|\|T_n(x_{n_k}-\hat x)\|+ \mu_n\nu_n\|J_r^{\Y}(T_nJ_{s^*}^{*}T_n^*r_n)\|\|T_n(x_{n_k}-\hat x)\|\\
&\le 3 \mu_{n} \|F(x_{n})-y\|^{r-1} \|T_n(x_{n_k}-\hat x)\| \\
&\le 9(1+\eta)\mu_0 B_0^{-p} \|F(x_{n})-y\|^{p}.\\
\end{aligned}
\end{equation*}
Then combining this with the inequality (\ref{eq5.11.1}), we can derive that
\begin{equation}\label{eq5.6}
\begin{aligned}
|\l \xi_{n_k}-\xi_{n_l},x_{n_k}-\hat x\r|
&\le\sum_{n=n_l}^{n_k-1} |\l \xi_{n+1}-\xi_n, x_{n_k}-\hat x\r|\\
&\le 9(1+\eta)\mu_{0} B_0^{-p} \sum_{n=n_l}^{n_k-1} \|F(x_{n})-y\|^p\\
&\le C\left(D_{\xi_{n_l}}\Theta(\hat x, x_{n_l}) -D_{\xi_{n_k}}\Theta (\hat x, x_{n_k})\right)
\end{aligned}
\end{equation}
with $C=9(1+\eta)B_0^{-p}/c_2>0$.
Thus we have from \eqref{2.28} and \eqref{eq5.6} that
\begin{equation}\label{eq3.16}
D_{\xi_{n_l}}\Theta(x_{n_k}, x_{n_l}) \le (1+C)\left(D_{\xi_{n_l}}\Theta(\hat x, x_{n_l}) -D_{\xi_{n_k}}\Theta (\hat x, x_{n_k})\right).
\end{equation}
By the monotonicity of $D_{\xi_{n}}\Theta(\hat{x}, x_{n})$, we obtain  that $D_{\xi_{n_l}}\Theta(x_{n_k}, x_{n_l}) \to 0$ as $k,l\to \infty$.
By the p-convexity of $\Theta$ we can conclude that $\{x_{n_k}\}$ is a Cauchy sequence in $\X$ and thus $x_{n_k}\to x_*$ as $k\to \infty$ for some $x_*\in B_{2\rho}(x_0)\subset \X$.

Next we show that $x_*\in D(\Theta)$.
We use $\xi_{n_l}\in \p \Theta(x_{n_l})$ to obtain
\begin{equation}\label{n}
\begin{aligned}
\Theta(x_{n_k})&= \Theta(x_{n_l}) +\l \xi_{n_l}, x_{n_k}-x_{n_l}\r + D_{\xi_{n_l}} \Theta(x_{n_k}, x_{n_l})\\
&\leq \Theta(x_{n_l}) +\l \xi_{n_l}, x_{n_k}-x_{n_l}\r +(1+C)D_{\xi_{n_l}}\Theta(\hat{x}, x_{n_l})
\end{aligned}
\end{equation}
Since $x_{n_k}\rightarrow x_*$ as $k\rightarrow \infty$, by using the lower semi-continuity of $\Theta$ we obtain
\begin{equation*}
  \begin{aligned}
\Theta(x_*)\le \liminf_{k\rightarrow\infty} \Theta(x_{n_k})
\leq
\Theta(x_{n_l}) +\l \xi_{n_l}, x_*-x_{n_l}\r + (1+C) D_{\xi_{n_l}} \Theta(\hat{x}, x_{n_l})
<\infty
\end{aligned}
\end{equation*}
This implies that $x_*\in D(\Theta)$.

Furthermore, in order to derive the convergence in Bregman distance, we take $k\rightarrow \infty$, and use (\ref{eq3.16}) and $x_{n_k}\rightarrow x_*$  to  derive for $l<k$ that
\begin{equation*}\label{eq3.18}
  \begin{aligned}
D_{\xi_{n_l}}\Theta(x_{*}, x_{n_l})
 \leq \liminf_{k\to \infty}D_{\xi_{n_l}}\Theta(x_{n_k}, x_{n_l})\leq \liminf_{k\to \infty}(1+C)(D_{\xi_{n_l}}\Theta(x_*, x_{n_l})-\epsilon_0 )\\
\end{aligned}
\end{equation*}
where $\epsilon_0 := \lim_{n\to \infty} D_{\xi_{n}}\Theta(x_{*}, x_{n}) $, whose existence is guaranteed by the monotonicity of
$D_{\xi_{n}}\Theta(x_{*}, x_{n}) $.
Since the above inequality holds for all $l$, by letting $l\to \infty$ we can obtain $\epsilon_0 \leq 0$.
Therefore, we derive $\lim_{n\to \infty} D_{\xi_{n}}\Theta(x_{*}, x_{n}) =\epsilon_0 =0$.

Finally we show that $x_*=x^\dag$. We  have
\begin{equation}\label{eq2.38}
  \begin{aligned}
D_{\xi_0}\Theta(x_{n_k}, x_0)&= D_{\xi_0}\Theta(x^\dag, x_0)-D_{\xi_{n_k}}\Theta(x^\dag, x_{n_k})+\l \xi_{n_k}-\xi_0, x_{n_k}-x^\dag\r\\
&\le D_{\xi_0}\Theta(x^\dag, x_0)+\l \xi_{n_k}-\xi_0, x_{n_k}-x^\dag\r.
\end{aligned}
\end{equation}
By using (\ref{eq5.6}), for any $\varepsilon>0$  we can find $k_0$ such that
\begin{equation*}
\left|\l \xi_{n_k}-\xi_{n_{k_0}}, x_{n_k}-x^\dag\r\right| <\frac{\varepsilon}{2}, \qquad k\ge k_0.
\end{equation*}
We next consider $\l \xi_{n_{k_0}}-\xi_0, x_{n_k}-x^\dag\r$.
By the definition of $\xi_n$, we have
\begin{equation*}\label{n}
  \begin{aligned}
\xi_{n+1}-\xi_n=
-\mu_n(2T_n^* r_n-\nu_n^\delta T_n^*J_{r}^{\Y}(T_nJ_{s^*}^{*}T_n^* r_n))
\end{aligned}
\end{equation*}
Since $\X$ is reflexive and 
 $\N(F'(x^\dag))\subset \N(F'(x))$ for all
$x\in B_{2\rho}(x_0)$,
we have
$\overline{\R(F'(x)^*)}\subset \overline{\R(F'(x^\dag)^*)}$ and $\xi_{n+1}-\xi_n \in \overline{{\mathcal R}(F'(x^\dag)^*)}$.
Then we can find $v_{n}\in\Y^*$ and $\beta_{n}\in\X^{*}$ such that
\begin{equation*}
 \xi_{n+1}-\xi_n=F'(x^\dag)^* v_{n} +\beta_{n} \quad \mbox{and} \quad \|\beta_{n}\|
\le \frac{\varepsilon}{3 B_1 n_{k_0}}, \quad 0\le n<n_{k_0},
\end{equation*}
where $B_1>0$ is a constant such that $\|x_n-x^\dag\|\le B_1$ for all $n$. Consequently
\begin{equation*}\label{n}
  \begin{aligned}
 \left|\l \xi_{n_{k_0}}-\xi_0, x_{n_k}-x^\dag\r \right|
&=\left|\sum_{n=0}^{n_{k_0}-1} \l \xi_{n+1}-\xi_n, x_{n_k}-x^\dag\r\right|\\
& =\left|\sum_{n=0}^{n_{k_0}-1}  \left[\l v_{n}, F'(x^\dag) (x_{n_k}-x^\dag)\r
+\l \beta_{n}, x_{n_k}-x^\dag\r \right]\right|\\
&\le \sum_{n=0}^{n_{k_0}-1} \left(\|v_{n}\| \|F'(x^\dag) (x_{n_k}-x^\dag)\|
+\|\beta_{n}\| \|x_{n_k}-x^\dag\|\right)\\
&\le (1+\eta) \sum_{n=0}^{n_{k_0}-1}  \|v_{n}\| \|F(x_{n_k})-y\| +\frac{\varepsilon}{3}.
\end{aligned}
\end{equation*}
Since $\|F(x_{n_k})-y\|\rightarrow 0$ as $n\rightarrow \infty$, we can find $k_1\ge k_0$ such that
\begin{equation*}
|\l \xi_{n_{k_0}}-\xi_0, x_{n_k}-x^\dag\r |<\frac{\varepsilon}{2}, \qquad \forall k\ge k_1.
\end{equation*}
Therefore $|\l \xi_{n_k}-\xi_0, x_{n_k}-x^\dag\r|<\varepsilon$ for all $k\ge k_1$. Since $\varepsilon>0$ is arbitrary,
we obtain $\lim_{k\rightarrow \infty} \l \xi_{n_k}-\xi_0, x_{n_k}-x^\dag\r=0$.
By taking $k\rightarrow \infty$ in \eqref{eq2.38} we obtain
\begin{equation*}
D_{\xi_0}\Theta(x_*, x_0)\le D_{\xi_0}\Theta(x^\dag, x_0).
\end{equation*}
According to the definition of $x^\dag$ we must have $D_{\xi_0}\Theta(x_*, x_0)=D_{\xi_0}\Theta(x^\dag, x_0)$.
A direct application of Lemma \ref{lemxdag} gives $x_*=x^\dag$.
\end{proof}

In order to use the above result to establish the convergence of Algorithm 1, we also need the following stability result.
\begin{theorem}[Stability analysis]\label{stability}
Let $\X$ be reflexive and let $\Y$ be uniformly smooth.
Let all the conditions in Proposition \ref{L5.1} hold.  Then for all $n\ge 0$ there hold
$$\xi_{n}^\d \rightarrow \xi_{n} ~~\text{and}~~ x_{n}^\d\rightarrow x_{n}, ~ \text{as }~\d\rightarrow 0.$$
\end{theorem}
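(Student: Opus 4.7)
The natural strategy is induction on $n$, exploiting the continuity of every operator appearing in \eqref{HPICPdelta}. The base case $n=0$ is immediate since both $\xi_0^\delta=\xi_0$ and $x_0^\delta=x_0$ are fixed independently of $\delta$. For the inductive step, I would assume $\xi_n^\delta\to\xi_n$ in $\X^*$ and $x_n^\delta\to x_n$ in $\X$ as $\delta\to 0$, and then propagate these through the single update producing $(\xi_{n+1}^\delta,x_{n+1}^\delta)$.

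The backbone of the argument is a chain of continuity statements. Continuity of $F'$ on $B_{2\rho}(x_0)$ from Assumption \ref{A1}(b), combined with Proposition \ref{L5.1} which keeps all iterates inside $B_{2\rho}(x_0)$, gives $T_n^\delta\to T_n$; continuity of $F$ with $y^\delta\to y$ gives $F(x_n^\delta)-y^\delta\to F(x_n)-y$ in $\Y$; uniform continuity of $J_r^\Y$ on bounded sets (from uniform smoothness of $\Y$) then yields $r_n^\delta\to r_n$ in $\Y^*$. Composing further, $(T_n^\delta)^* r_n^\delta\to T_n^* r_n$, and then $J_{s^*}^*$ (continuous on bounded subsets of $\X^*$, since $\X^*$ is uniformly smooth because $\X$ is uniformly convex) followed by $T_n^\delta$, then $J_r^\Y$, then $(T_n^\delta)^*$ all preserve convergence because the operator norms stay uniformly bounded by $B_0$.

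The main obstacle is the step sizes $\mu_n^\delta$ and $\nu_n^\delta$, each of which can degenerate when the residual vanishes. I would split into two cases. If $F(x_n)\neq y$, then $\|F(x_n^\delta)-y^\delta\|$ stays bounded below by a positive constant for small $\delta$, so $\mu_n^\delta\to\mu_n$ and $\nu_n^\delta\to\nu_n$ by direct continuity, and the continuity chain above delivers $\xi_{n+1}^\delta\to\xi_{n+1}$. If instead $F(x_n)=y$, then $r_n=0$ and the exact iterate is stationary, so $\xi_{n+1}=\xi_n$; in this case I would argue directly that the noisy increment vanishes. The definition of $\nu_n^\delta$ forces $\nu_n^\delta\|J_r^\Y(T_n^\delta J_{s^*}^*{T_n^\delta}^* r_n^\delta)\|=\|r_n^\delta\|$, which together with $\|T_n^\delta\|\le B_0$ and $\|r_n^\delta\|=\|F(x_n^\delta)-y^\delta\|^{r-1}$ yields
\[
\|\xi_{n+1}^\delta-\xi_n^\delta\|\le 3\mu_n^\delta B_0 \|r_n^\delta\| = 3\mu_0 B_0^{1-p}\|F(x_n^\delta)-y^\delta\|^{p-1},
\]
which tends to zero because $p\ge 2$, giving $\xi_{n+1}^\delta\to\xi_n=\xi_{n+1}$.

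Once $\xi_{n+1}^\delta\to\xi_{n+1}$ is established, the convergence $x_{n+1}^\delta\to x_{n+1}$ follows immediately from $x_{n+1}^\delta=\nabla\Theta^*(\xi_{n+1}^\delta)$ together with the Hölder-type continuity estimate \eqref{thgradient} for $\nabla\Theta^*$. This closes the induction, and I expect the splitting into vanishing and non-vanishing residual cases to be the only nontrivial point of the argument.
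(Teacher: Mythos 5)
Your proposal is correct and follows essentially the same route as the paper: induction on $n$, a case split on whether $F(x_n)=y$, the bound $\|\xi_{n+1}^\delta-\xi_n^\delta\|\le 3\mu_0 B_0^{1-p}\|F(x_n^\delta)-y^\delta\|^{p-1}$ in the vanishing-residual case, and the continuity of $F$, $F'$, $J_r^{\Y}$ and $\nabla\Theta^*$ in the other. The only difference is that you spell out the continuity chain and the convergence of $\nu_n^\delta$ more explicitly than the paper does.
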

\begin{proof}
The result is trivial for $n=0$.
We next assume that the result is true for some $n\ge 0$ and show that
$\xi_{n+1}^\d \rightarrow \xi_{n+1}$ and $x_{n+1}^\d\rightarrow x_{n+1}$ as $\d\rightarrow 0$.
We consider two cases.

\texttt{Case 1:} $F(x_{n})=y$.
In this case we have $\mu_{n}=0$ and $\lim\limits_{\d\rightarrow 0}\|F(x_{n}^\d) -y^\d\|= 0$ by the continuity of $F$.
Thus
$$\xi_{n+1}^\d-\xi_{n+1} =\xi_{n}^\d-\xi_{n} -\mu_{n}^\d {T_n^\d}^*\left(2 r_n^\d-\nu_n^\delta J_{r}^{\Y}({T_n^\d}J_{s^*}^{*}{T_n^\d}^* r_n^\d)\right)$$
which implies that
\begin{equation*}
\begin{aligned}
\|\xi_{n+1}^\d-\xi_{n+1}\|& \le\|\xi_{n}^\d-\xi_{n}\| +3 B_0\mu_{n}^\d \|F(x_{n}^\d)-y^\d\|^{r-1}\\
& \le\|\xi_{n}^\d-\xi_{n}\| +3\mu_0 B_0^{1-p} \|F(x_{n}^\d)-y^\d\|^{p-1}.
\end{aligned}
\end{equation*}
By the induction hypotheses, we then have $\lim\limits_{\d\rightarrow 0}\xi_{n+1}^\d= \xi_{n+1}$.
Consequently, by using the continuity of $\nabla \Theta^*$, we have $x_{n+1}^\d =\nabla \Theta^*(\xi_{n+1}^\d)\rightarrow
\nabla \Theta^*(\xi_{n+1}) =x_{n+1}$ as $\d\rightarrow 0$.

\texttt{Case 2:} $F(x_{n})\ne y$.
In this case we have $\|F(x_{n}^\d)-y^\d\|>\tau \d$ for small $\d\rightarrow 0$.
Therefore
$$\mu_{n}^\d=\mu_0 B_0^{-p} \|F(x_{n}^\d)-y^\d\|^{p-r} \rightarrow \mu_{n}=\mu_0 B_0^{-p} \|F(x_{n})-y\|^{p-r}$$
as $\d\to 0$.
By Assumption \ref{A1}(b) and the uniform smoothness of $\Y$, we know that $F$, $F'$ and $J_r^{\Y}$ are continuous.
It then follows from the induction hypotheses that
$\xi_{n+1}^\d\rightarrow \xi_{n+1}$ and $x_{n+1}^\d\rightarrow x_{n+1}$ as $\d\to 0$ using again the continuity of $\nabla \Theta^*$.
\end{proof}

We now apply the above results for proving the following convergence result, which shows the iteration (\ref{HPICP}) in combination with the discrepancy principle is a regularization method.

\begin{theorem}[Convergence analysis]\label{convergence}
Let $\X$ be reflexive and let $\Y$ be uniformly smooth.
Let Assumption \ref{A1} hold.
Let $\Theta: \X\to (-\infty, \infty]$ be proper, lower semi-continuous, and p-convex function satisfies (\ref{pconv}).
Assume that initial value $x_0$ and $\xi_0$ satisfies
$$D_{\xi_0} \Theta (x^\dag, x_0) \le c_0 \rho^p.$$
Then for $\{\xi_n^\d\}$ and $\{x_n^\d\}$ defined by Algorithm 1 with $\tau>1$ and $\mu_0>0$ satisfying
$$c_1:=1-3\eta-\frac{3(\eta+1)}{\tau}-3\left(\frac{3\mu_0}{2c_0}\right)^{\frac{1}{p-1}}>0.$$
there is a solution $x_*\in B_{2\rho}(x_0)\cap D(\Theta)$ of (\ref{Nonlinear}) such that
$$\lim_{\d\rightarrow 0} \|x_{n_\d}^\d-x_*\|=0~~\text{and}~~
\lim_{\d\rightarrow 0} D_{\xi_{n_\d}^\d}\Theta(x_*, x_{n_\d}^\d) =0.$$
If in addition $\N(F'(x^\dag))\subset \N(F'(x))$ for all $x\in B_{2\rho}(x_0)\cap D(F)$, then $x_*=x^\dag$.
\end{theorem}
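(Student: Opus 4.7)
The plan is to reduce the noisy-data convergence to the exact-data convergence already proved in Lemma~\ref{L5.21}, using the finite-step stability of Theorem~\ref{stability} to bridge between the two. Fix an arbitrary sequence $\d_k\to 0$ with data $y^{\d_k}$, set $n_k:=n_{\d_k}$, and split into two cases according to whether $\{n_k\}$ admits a bounded subsequence or not. The goal is to show that along some subsequence $x_{n_k}^{\d_k}$ converges, in norm and in Bregman distance, to a solution $x_*\in B_{2\rho}(x_0)\cap D(\Theta)$; a standard subsequence-uniqueness argument will then upgrade this to convergence of the full family.

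In the bounded case I would extract a subsequence along which $n_k\equiv n$ is constant. Theorem~\ref{stability} gives $\xi_n^{\d_k}\to\xi_n$ and $x_n^{\d_k}\to x_n$, while the discrepancy bound $\|F(x_n^{\d_k})-y^{\d_k}\|\le \tau\d_k$ together with continuity of $F$ forces $F(x_n)=y$. Set $x_*:=x_n$. Norm convergence of $x_{n_k}^{\d_k}$ to $x_*$ is immediate. For the Bregman distance I would rewrite, using the Fenchel identity \eqref{L-Fequal} and $\xi_n^{\d_k}\in\p\Theta(x_n^{\d_k})$,
\[
D_{\xi_n^{\d_k}}\Theta(x_*,x_n^{\d_k})=\Theta(x_*)-\l\xi_n^{\d_k},x_*\r+\Theta^*(\xi_n^{\d_k}),
\]
and appeal to the fact that $\Theta^*$ is finite and continuous on $\X^*$ (by Lemma~1.2(2)) to pass to the limit, obtaining $D_{\xi_n^{\d_k}}\Theta(x_*,x_n^{\d_k})\to D_{\xi_n}\Theta(x_n,x_n)=0$.

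In the unbounded case, passing to a subsequence with $n_k\to\infty$, Lemma~\ref{L5.21} supplies a solution $x_*\in B_{2\rho}(x_0)\cap D(\Theta)$ with $x_n\to x_*$ and $D_{\xi_n}\Theta(x_*,x_n)\to 0$. The monotonicity estimate \eqref{eq5.10} with $\hat x=x_*$, applied to the noisy iteration, gives, for any fixed $m\le n_k$,
\[
D_{\xi_{n_k}^{\d_k}}\Theta(x_*,x_{n_k}^{\d_k})\le D_{\xi_m^{\d_k}}\Theta(x_*,x_m^{\d_k}).
\]
Rewriting the right-hand side as $\Theta(x_*)-\l\xi_m^{\d_k},x_*\r+\Theta^*(\xi_m^{\d_k})$ and using Theorem~\ref{stability} for fixed $m$ together with continuity of $\Theta^*$, I can let $k\to\infty$ first to get $\limsup_k D_{\xi_{n_k}^{\d_k}}\Theta(x_*,x_{n_k}^{\d_k})\le D_{\xi_m}\Theta(x_*,x_m)$, and then $m\to\infty$ to conclude $D_{\xi_{n_k}^{\d_k}}\Theta(x_*,x_{n_k}^{\d_k})\to 0$. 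The $p$-convexity inequality \eqref{pconv} then converts this into norm convergence $x_{n_k}^{\d_k}\to x_*$.

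To finish, under the null-space condition Lemma~\ref{L5.21} identifies the exact-data limit as $x^\dag$; in the bounded case $F(x_n)=y$ forces $x_m=x_n$ for all $m\ge n$, so the exact-data limit is $x_n$, and Lemma~\ref{L5.21} again yields $x_n=x^\dag$. Thus every subsequence of $\d_k\to 0$ has a further subsequence along which $x_{n_k}^{\d_k}\to x^\dag$ (both in norm and Bregman distance), which promotes to convergence of the full net. The main technical obstacle I anticipate is the exchange of limits in the monotonicity estimate in the unbounded case; rewriting the Bregman distance via the Fenchel conjugate is what makes this clean, since it replaces the need for any continuity of $\Theta$ (which may only be lower semicontinuous) by continuity of $\Theta^*$, which is automatic from the $p$-convexity of $\Theta$.
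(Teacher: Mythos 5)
Your proposal is correct and is essentially the proof the paper relies on: the paper's own ``proof'' is just a citation to Theorem 3.9 of \cite{JinWang}, and that argument proceeds exactly as you describe --- split on whether the stopping indices $n_{\d_k}$ stay bounded or tend to infinity, use Theorem \ref{stability} at a fixed index in the first case, use Lemma \ref{L5.21} in the second, and pass to the limit in the monotonicity estimate \eqref{eq5.10} by rewriting the Bregman distance through the Fenchel conjugate $\Theta^*$, whose continuity substitutes for the missing continuity of $\Theta$. Your observation that in the bounded case the exact iteration stagnates at $x_n$ (so both cases produce the same limit $x_*$, making the subsequence--uniqueness step legitimate even without the null-space condition) is precisely the point that makes the argument close.
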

\begin{proof}
See Theorem 3.9 in \cite{JinWang}.
\end{proof}

\section{Numerical examples}
In this section we present some numerical simulations with one-dimentional and two-dimensional cases to test the good performance of the proposed HPICP method with various choices of the convex function $\Theta$, in comparison with the existing Landweber iteration method with convex penalty (LICP).
Our simulations were done by using MATLAB R2010a on a Lenovo laptop with Intel Core i5-4200U CPU 2.30 GHz and 4.00 GB memory.

A key ingredient for HPICP method is the resolution of the minimization problem
\begin{equation}\label{eq:4.1}
x= \arg \min_{z\in \X} \left\{\Theta(z) -\l \xi, z\r\right\},~~\forall \xi\in\X^*.
\end{equation}
Next we give some discussion on the resolution of \eqref{eq:4.1} for various choices of $\Theta$ with $p=2$ as follows:

\emph{Case I: } Let $\X=L^2(\Omega)$ and the sought solution is partly sparse, we may consider the 2-convex function
\begin{equation}\label{eq:L1}
\Theta(x):= \frac{1}{2\beta} \int_\Omega |x(\omega)|^2 d\omega +\int_\Omega |x(\omega| d\omega
\end{equation}
with $\beta>0$.
The minimization of \ref{eq:4.1} for this case can be given explicitly by the following soft thresholding:
\begin{equation*}
\begin{aligned}
x&=\arg \min_{x\in L^2} \left\{\|x\|_{L^1} + \frac{1}{2\beta} \|x\|^2_{L^2}-\l \xi, x\r\right\}\\
&=\left\{\begin{array}{lll}
\beta(\xi(\omega)-1), & \mbox{ if } \xi(\omega) >1,\\
0, & \mbox{ if } |\xi(\omega)|\le 1,\\
\beta(\xi(\omega) +1), & \mbox{ if } \xi(\omega) <-1.
\end{array}\right.
\end{aligned}
\end{equation*}

\emph{Case II: } Let $\X=L^2(\Omega)$ and the sought solution is piecewise constant, we may consider the total variation like function
\begin{equation}\label{eq:TV}
\Theta(x):= \frac{1}{2\beta} \int_\Omega |x(\omega)|^2 d\omega +TV(x),
\end{equation}
with $\beta>0$.
The minimization of (\ref{eq:4.1}) for this case can be given explicitly as following:
\begin{equation*}
\begin{aligned}
x=\arg \min_{x\in L^2} \left\{\beta TV(x) + \frac{1}{2} \|x-\beta\xi\|^2_{L^2(\Omega)}\right\},
\end{aligned}
\end{equation*}
which is the well-known ROF model (see \cite{ROF}) in image denoising.
There are many efficient numerical solvers developed in the literature \cite{BT2009n,BT2009,cp11,TV1,TV2,TV3}; we use the fast iterative shrinkage-thresholding algorithm (FISTA) introduced from \cite{BT2009n,BT2009} in our numerical simulations.

We here consider the nonlinear model problem which consists of recovering the potential term in an elliptic equation.
Let $\Omega\subset\mathbb{R}^d(d=1,2)$ be an open bounded domain with a Lipschitz boundary $\Gamma$ and $f\in L^2(\Omega)$.
We consider the identification of the parameter $c$ in the equation
\begin{eqnarray}\label{Potential}
\begin{aligned}
\left\{\begin{array}{ll}
&-\triangle u+cu=f,~\textup{in}~\Omega,\cr
&\frac{\p u}{\p n}=0,~\textup{on}~\Gamma.
\end{array}\right.
\end{aligned}
\end{eqnarray}
We assume that the true potential $c^\dag$ is in $L^2(\Omega)$.
For each $c$ in the domain $D(F):=\left\{c\in L^{\infty}(\Omega):c\geq\bar{c}~\text{for some}~\bar{c}\geq0\right\}$, \eqref{Potential} has a unique solution $u=u(c)\in H^1(\Omega)$.
By the Sobolev embedding $H^1(\Omega)\hookrightarrow L^r(\Omega)$, we can define the nonlinear operator $F: \X=L^2(\Omega)\to \Y=L^r(\Omega)$ with $F(c)=u(c)$ for any $1<r<\infty$.
Hence we identify $c$ in the admissible set $D(F)$ from an $L^r$ measurement of $u$.
Recall that in the Banach space $L^r(\Omega)$ with $1<r<\infty$, the duality mapping $J_r: L^r(\Omega)\to L^{r^*}(\Omega)$ is given by
$$J_r(\upsilon):=|\upsilon|^{r-1}\text{sign}(\upsilon),~\upsilon\in L^r(\Omega).$$

We next will report numerical results to indicate the performance of HPICP method with various choices of the convex function $\Theta$ and the Banach spaces $\Y$.
The main computational cost stems from the numerical solutions of differential equations related to calculating the Fr$\acute{e}$chet  derivatives and their adjoint.
In order to carry out the computation, the forward operator was discretized using finite elements on a uniform grid (triangular, in the case of two dimentions), which is based on the shared Matlab code by Bangti Jin of \cite{Outliers}.
Given the true parameter $c^\dag(x)$, the simulated noise data $u^\d$ is generated by adding noise to the synthetic exact data $u^\dag=F(c^\dag)$ as follows
$$u^\d=u^\dag+\d \cdot n,$$
here $\d$ is the noise level and $n$ is the random variable obeying the standard normal distribution.
In addition, in order to measure the accuracy of solution more quantitatively, we employ the following relative error
$$\text{RE}=\frac{\|c-c^\dag\|}{\|c^\dag\|},$$
where $c$ represents the approximate solution.

In the following we implement the HPICP method using $c_0=\xi_0=0$ as the initial guess.
We take the step length $\nu_n = \|{T_n^\d}^* r_n^\d\|_{L^2}^2 / \|{T_n^\d}^*J_{r}^{\Y}(T_n^\d J_{s^*}^{*}{T_n^\d}^* r_n^\d)\|_{L^2}^2$ with
$$\mu_n^\d=\frac{\mu_0\|F(x_n^\d)-y^\d\|_{\Y}^{(p-1)r}}{\|{T_n^\d}^* r_n^\d\|_{L^2}^p},$$
here $\mu_0=(1-1/\tau)/\beta$.
To test the effects of $\beta$ for given convex penalty (\ref{eq:L1}) and (\ref{eq:TV}), we apply different choice for $\beta$ to perform the numerical computation.
We will later report the detailed numerical results recovered by our proposed method (HPICP) and the current existing method (LICP), respectively, including the required iteration number (i.e., $n_\d$), the computational time (i.e., time(s)) as well as the relative error (i.e., RE) between the true solutions and the regularized solutions.
\begin{figure}[htbp]
\centering
\begin{minipage}[b]{0.32\textwidth}
\centering
\includegraphics[width=1.8in]{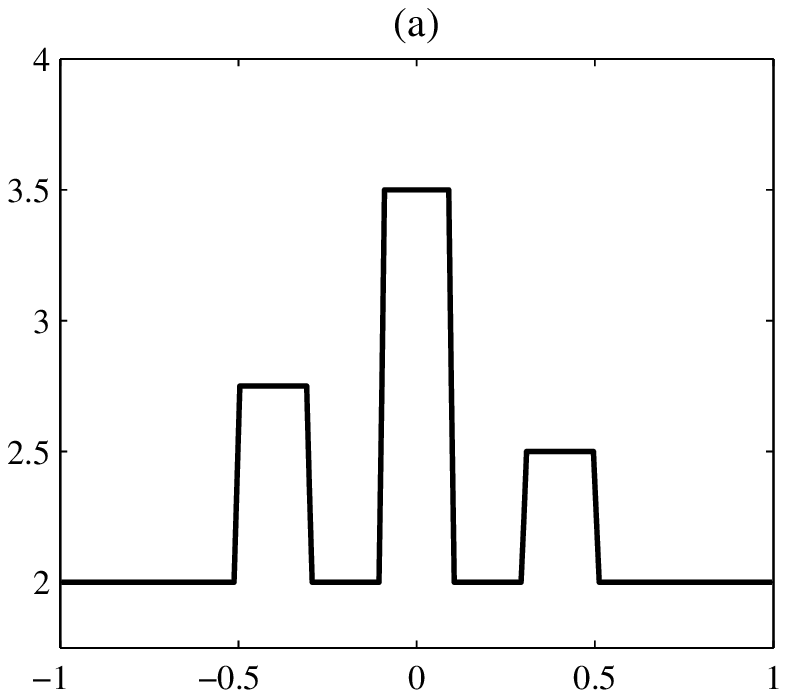}
\end{minipage}
\begin{minipage}[b]{0.32\textwidth}
\centering
\includegraphics[width=1.8in]{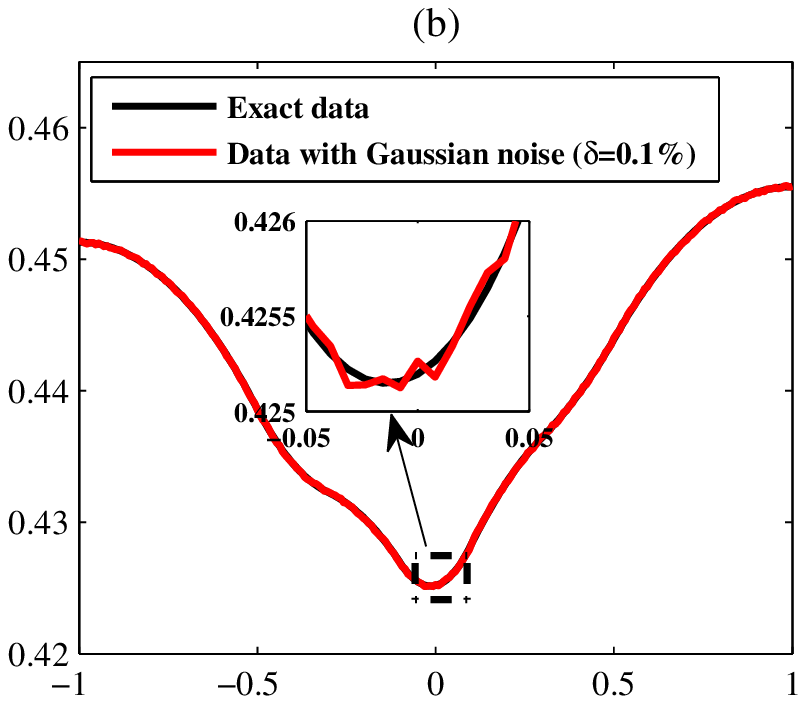}
\end{minipage}
\begin{minipage}[b]{0.32\textwidth}
\centering
\includegraphics[width=1.8in]{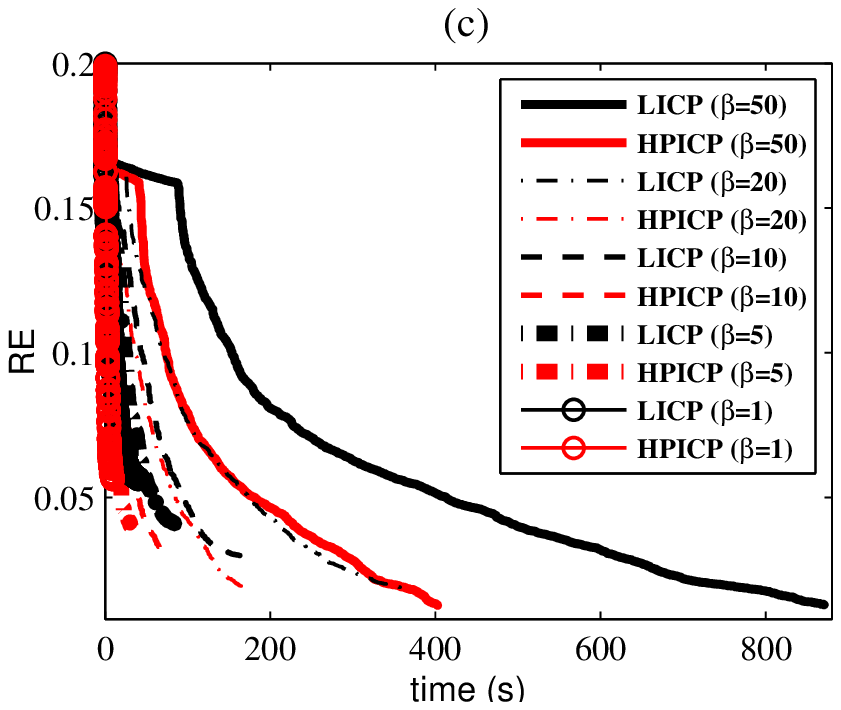}
\end{minipage}
\begin{minipage}[b]{0.32\textwidth}
\centering
\includegraphics[width=1.8in]{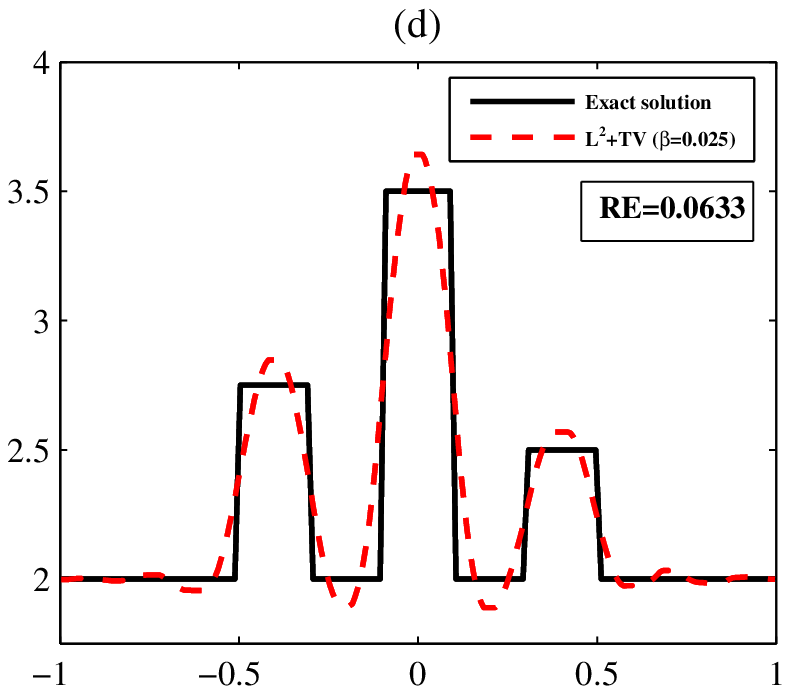}
\end{minipage}
\begin{minipage}[b]{0.32\textwidth}
\centering
\includegraphics[width=1.8in]{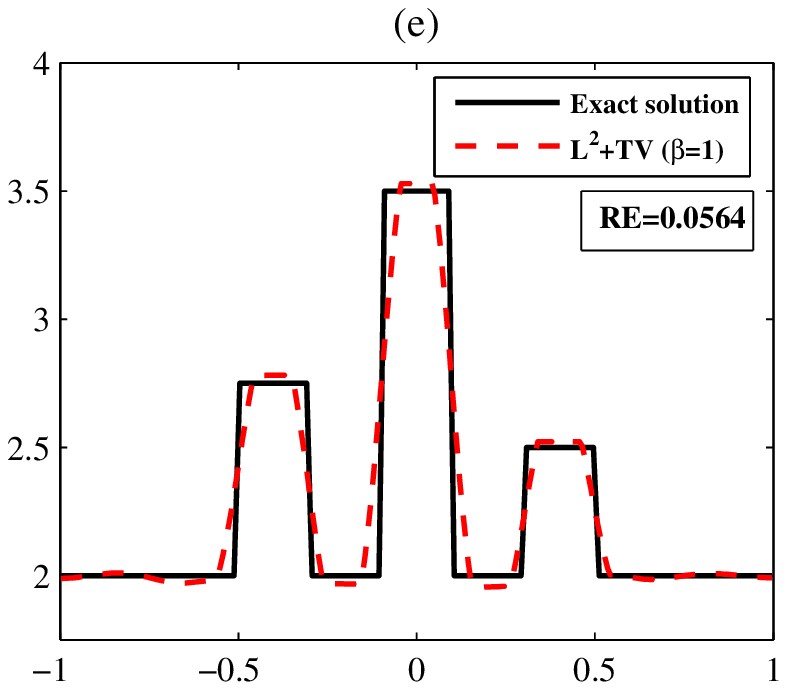}
\end{minipage}
\begin{minipage}[b]{0.32\textwidth}
\centering
\includegraphics[width=1.8in]{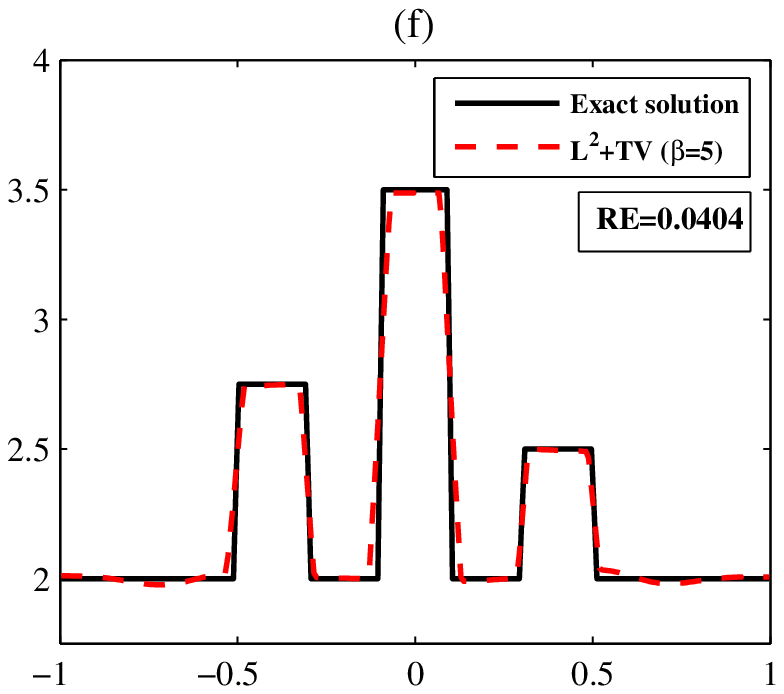}
\end{minipage}
\begin{minipage}[b]{0.32\textwidth}
\centering
\includegraphics[width=1.8in]{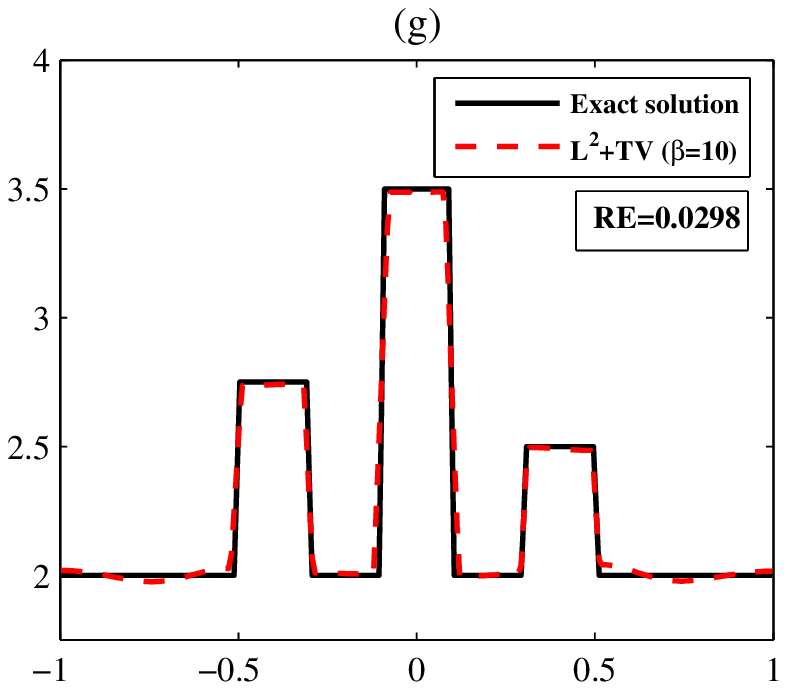}
\end{minipage}
\begin{minipage}[b]{0.32\textwidth}
\centering
\includegraphics[width=1.8in]{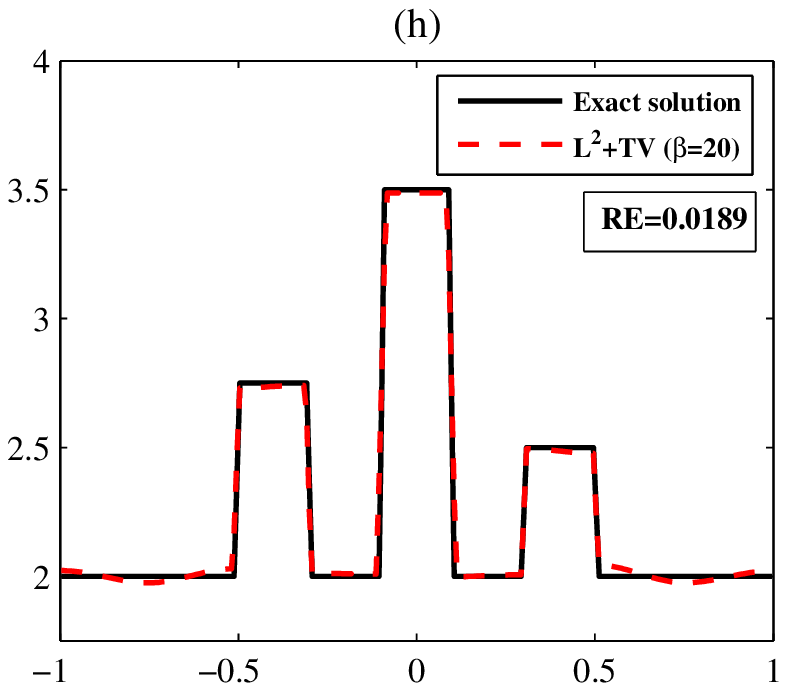}
\end{minipage}
\begin{minipage}[b]{0.32\textwidth}
\centering
\includegraphics[width=1.8in]{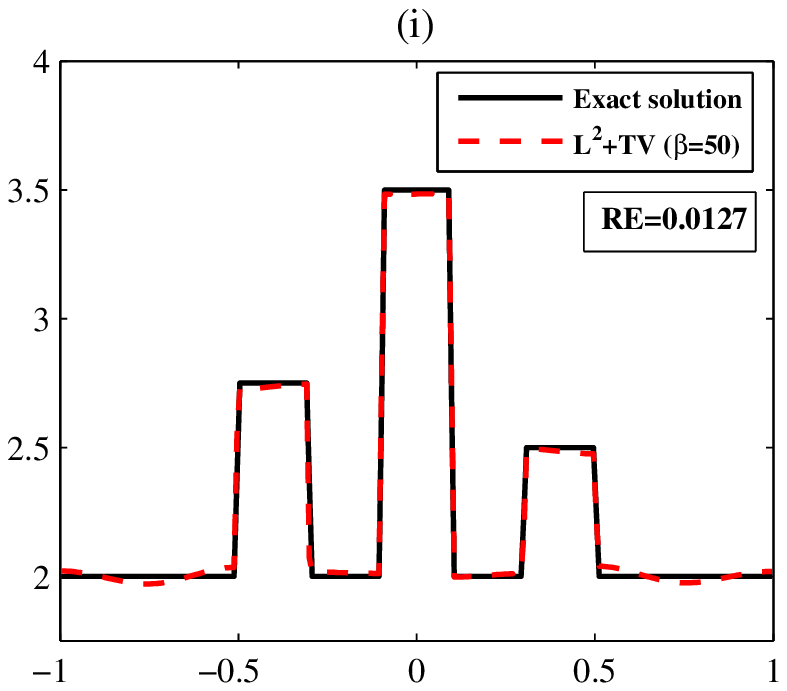}
\end{minipage}
\caption{(a) exact solution; (b) exact data and noisy data with Gaussian noise; (c) convergence behavior by HPICP and LICP, respectively; (d)-(i): reconstruction results by HPICP with different choice of $\beta$.}
\label{Figure 1}
\end{figure}

\emph{\textbf{Example 4.1} One-dimensional example.} We consider the one-dimensional problem (\ref{Potential}) on the interval $\Omega=[-1,1]$ with the source term $f(x)=1$.
The mesh size is $h=1/N$ with the grid points number $N=256$.
Figure 1(a) shows the situation that the sought solution is piecewise constant, which is given by $$c^\dag=2+\frac{3}{4}\chi_{[-0.5,-0.3]}+\frac{3}{2}\chi_{[-0.1,0.1]}+\frac{1}{2}\chi_{[0.3,0.5]}.$$
For this case, we take $\Theta$ to be $L^2+TV$ regularization functional defined in \eqref{eq:TV}.

We identify the true parameter $c^\dag(x)$ given in Figure 1(a) using Gaussian noisy data with $\d=0.1\%$ noise level shown in Figure 1(b).
We choose $\tau=1.1$ in the discrepancy principle.
The comparison of reconstructed results by HPICP and LICP with $r=2$ (i.e., $\Y=L^2[-1,1]$) are summarized in Table 1.
It can be seen from Table 1 that the regularized solutions by HPICP have the similar relative error qualities to those by LICP, but in less iteration number and computational time.
That is to say, our proposed HPICP method leads to a strongly decrease of the iteration numbers and the overall computational time can be significantly reduced.
In particular, for smaller $\beta$ the calculation times shows better performance of HPICP method under consideration.
On the other hand, it is clear that when relatively larger $\beta$ is used, more accurate reconstructed results can be obtained; however the computation could take longer time because the convexity of the minimization problem involved becomes weaker and hence more iteration steps are required to obtain an approximate minimizer within a certain accuracy.

\begin{table}[htbp]\sf \footnotesize
\centering
\caption{Comparison of numerical results for Example 4.1 by HPICP and LICP with four different values of $\beta$ under the noise level $\delta=0.1\%$.}
\centering
    \begin{tabular}{|c|c|c|c|c|c|c|c|}
    \hline
    \diagbox{LICP}{\textbf{HPICP}}& $n_\d$&RE&time(s)\cr\hline
    $\beta=0.025$&\diagbox{24903}{ \textbf{1676}}&\diagbox{0.0638}{ \textbf{0.0633}}&\diagbox{35.7858}{\textbf{2.4785}}\cr\hline
    $\beta=~1$&\diagbox{9931}{ \textbf{2150}}&\diagbox{0.0564}{ \textbf{0.0564}}&\diagbox{40.5642}{\textbf{9.3851}}\cr\hline
    $\beta=~5$&\diagbox{10072}{\textbf{3460}}&\diagbox{0.0405}{ \textbf{0.0404}}&\diagbox{88.0081}{\textbf{30.8745}}\cr\hline
    $\beta=10$&\diagbox{14573}{\textbf{6991}}&\diagbox{0.00299}{ \textbf{0.0298}}&\diagbox{163.7898}{\textbf{78.9212}}\cr\hline
    $\beta=20$&\diagbox{25322}{ \textbf{12137}}&\diagbox{0.0190}{ \textbf{0.0189}}&\diagbox{357.6894}{\textbf{174.4382}}\cr\hline
    $\beta=50$&\diagbox{55143}{ \textbf{26733}}&\diagbox{0.0129}{ \textbf{0.0127}}&\diagbox{853.1009}{\textbf{403.0396}}\cr\hline
    \end{tabular}
\end{table}

In order to visibly illustrate the convergence behavior of both methods, we draw the curves from RE vs. time(s) with various parameter $\beta$ in Figure 1(c).
It is clear that the relative errors of HPICP are consistently lower than those of LICP.
As can be expected, the convergence rate of the proposed HPICP is significantly accelerated compared to that of LICP.
We then in Figure 1(d)-(i) plot the corresponding regularized solutions by HPICP for some selected values of $\beta$ to further visualize the performance.
Since the results by LICP have the similar qualities to those by HPICP, we here do not list them.
We observe that all the locations of the bumps are correctly identified with appropriate value of $\beta$, and their magnitudes are also reasonable.

In addition, in order to test the robustness of the proposed method (HPICP) to noise, four various noise level are added to the generated exact data, respectively.
For each noise level, we summarized detailed computational results in Table 2.
As can be expected, HPICP shows the favorable robustness.
We observe from Table 2 that with the increase of the noise levels the relative errors increase, which indicates that the accuracy of the measurement data has an effect on the reconstructed solution quality.

\begin{table}[htbp] \sf \footnotesize
\centering
\caption{Comparison of numerical results for Example 4.1 by HPICP with $\beta=20$ at four different noise levels.}
\centering
    \begin{tabular}{|c|c|c|c|}
    \hline
    Noise level&$n_\d$ & RE&Time(s) \cr\hline
    $\delta=~~~1\%$&1744&0.0999&25.3991\cr\hline
    $\delta=~0.5\%$&3081&0.0760&46.5127\cr\hline
    $\delta=~0.1\%$&12137&0.0189&174.4382\cr\hline
    $\delta=0.05\%$&19550&0.0178&267.8776\cr\hline
    \end{tabular}
\end{table}

Finally, we present a graphical demonstration of the effect of using Banach spaces setting in our considerations.
Therefore we choose the Banach space $\Y=L^r[-1,1]$.
Figure 2(a) shows the plot of the noisy data that contains a few data points, called outliers, which are highly inconsistent with other data points.
$L^r$ misfit data terms with $r>1$ close to 1 are especially suitable for the outliers noise, see \cite{Outliers,Outliers1}.
For fixed $\beta=20$ and under the above outliers data, Figure 2(b)-(d) present the reconstruction results with $r=1.05$, $r=1.5$ and $r=2$, respectively.
It can be seen that the method with small $r$ is robust enough to prevent being affected by outliers.

\begin{figure}[htbp]
\centering
\begin{minipage}[b]{0.425\textwidth}
\centering
\includegraphics[width=2.35in]{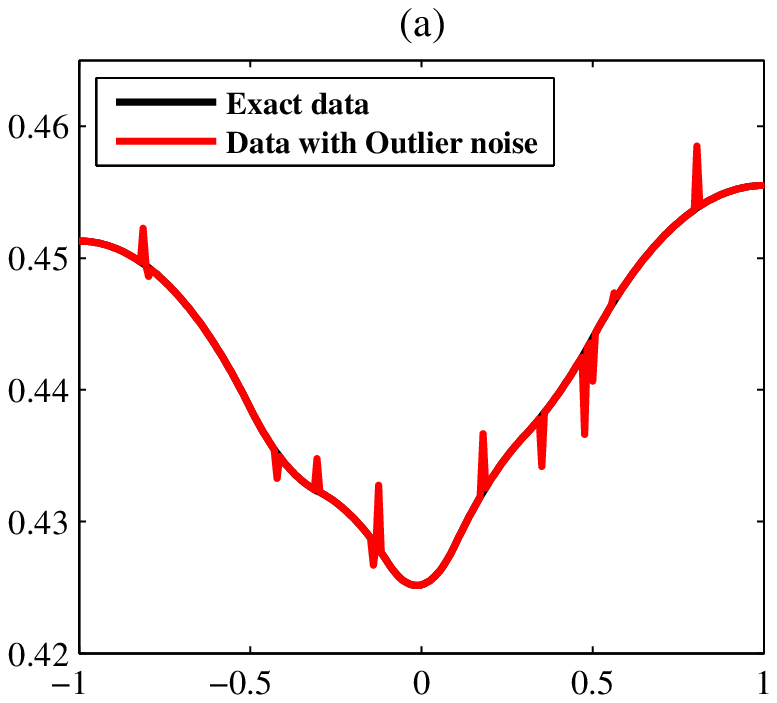}
\end{minipage}
\begin{minipage}[b]{0.425\textwidth}
\centering
\includegraphics[width=2.35in]{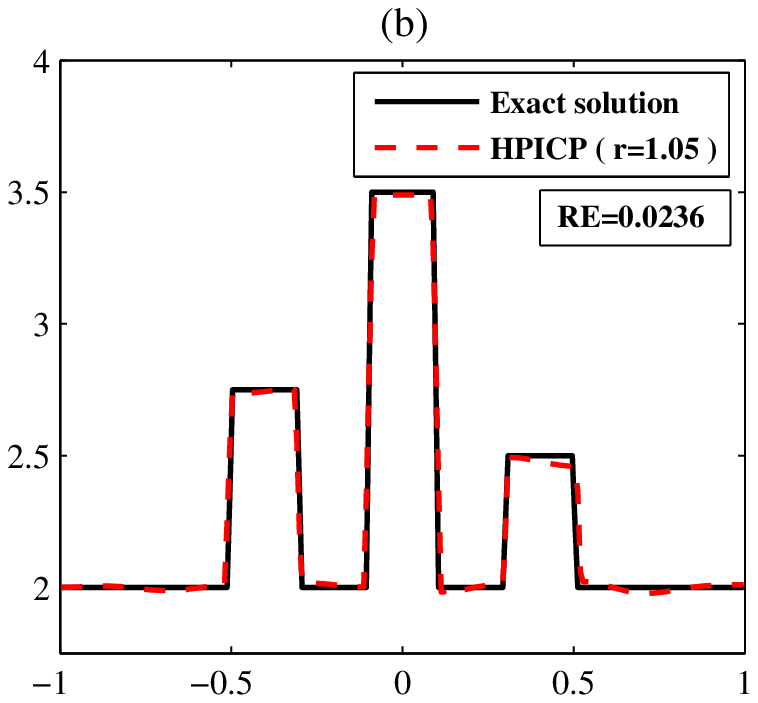}
\end{minipage}
\begin{minipage}[b]{0.425\textwidth}
\centering
\includegraphics[width=2.35in]{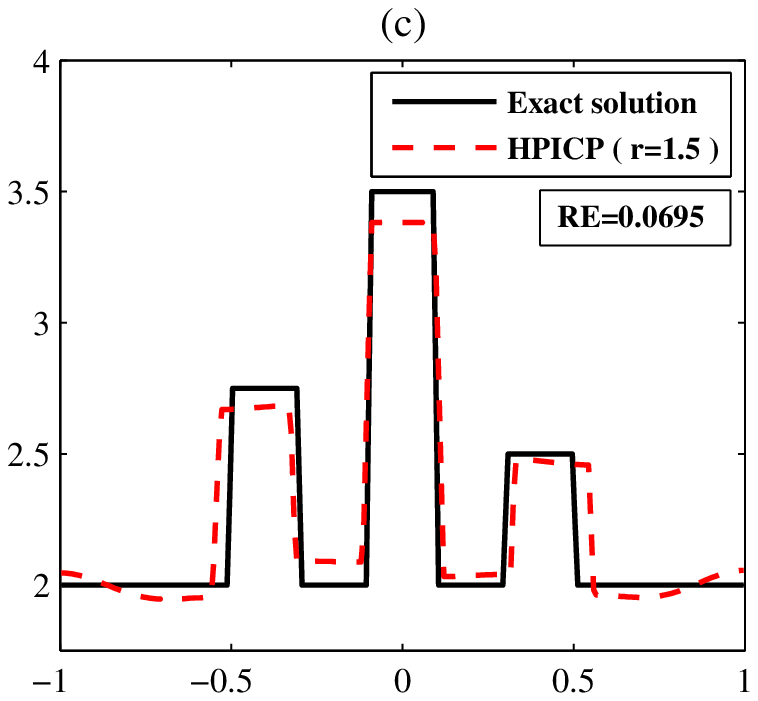}
\end{minipage}
\begin{minipage}[b]{0.425\textwidth}
\centering
\includegraphics[width=2.35in]{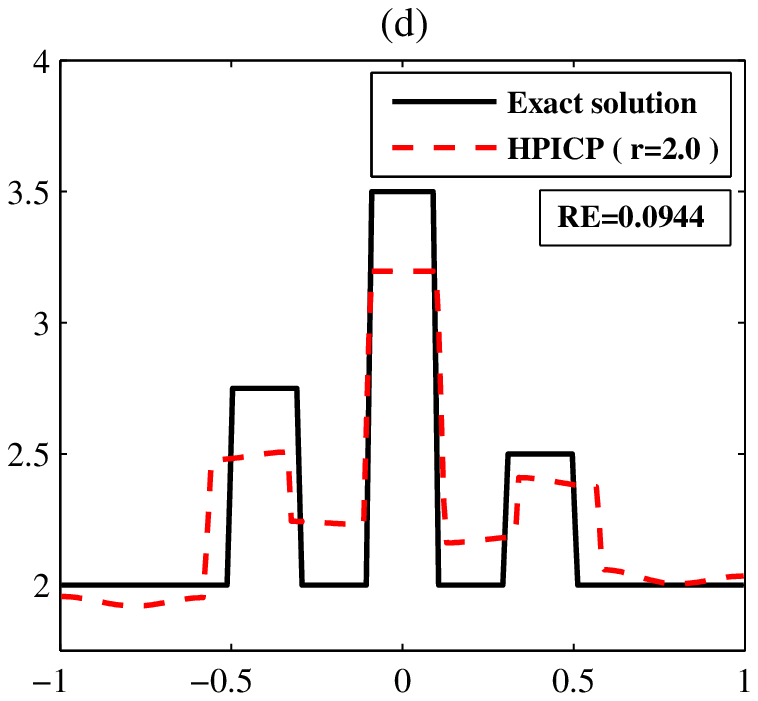}
\end{minipage}
\caption{Reconstruction obtained by HPICP method with different $r$ for fixed $\beta=20$.}
\label{Figure 2}
\end{figure}

\emph{\textbf{Example 4.2} Two-dimensional example.} Here, we consider the two-dimensional problem on the unit square, i.e., $\Omega=[-1,1]\times[-1,1]$, $f(x,y)=1$, and
$$c^\dag(x,y)=1+\cos(\pi x)\cos(\pi y)\chi_{|(x,y)|_\infty<1/2},$$
see Figure 3(a).
For this situation, we take $\Theta$ to be $L^2+L^1$ regularization functional defined in (\ref{eq:L1}).
To obtain exact data $u$ and noisy data $u^\d$, the mesh size for the forward solution is 7938 triangulation elements.
We choose $\tau=2.1$ in the discrepancy principle.
The more detailed comparison of the solution by HPICP and LICP with $\beta=1$ under five different noise levels are summarized in Table 3.
It can be seen that both methods provide regularized solutions of similar quality, but the iteration number and the computational time can be significantly reduced when the homotopy perturbation technique are used.
The results also indicate that both methods show the favorable robustness.
Furthermore, we in Figure 3(b)-(d) depict the numerical results of HPICP with $\beta=1$ under three different noise levels, respectively.
The numerical results of LICP have the similar qualities, and thus not shown here.
It is clear that the solution accurately captures the shape as well as the magnitude of the potential $c^\dag$, and thus represents a good approximation.

\begin{table}[htbp]\sf\footnotesize
\centering
\caption{Comparison of numerical results for Example 4.2 by HPICP and LICP with $\beta=1$ under five different noise levels.}
\centering
    \begin{tabular}{|c|c|c|c|c|c|c|c|}
    \hline
    \diagbox{LICP}{\textbf{HPICP}}& $n_\d$ & RE &
    Time(s)\cr\hline
    $\delta=~~~1\%$&\diagbox{356}{\textbf{243}}&\diagbox{0.0195}{ \textbf{0.0195}}&\diagbox{28.8562}{\textbf{18.5048}}\cr\hline
    $\delta=~0.5\%$&\diagbox{538}{\textbf{297}}&\diagbox{0.0149}{ \textbf{0.0146}}&\diagbox{43.3457}{\textbf{24.9653}}\cr\hline
    $\delta=~0.1\%$&\diagbox{1398}{\textbf{960}}&\diagbox{0.0053}{ \textbf{0.0052}}&\diagbox{113.7625}{\textbf{79.5354}}\cr\hline
    $\delta=0.05\%$&\diagbox{2439}{\textbf{1313}}&\diagbox{0.0040}{ \textbf{0.0040}}&\diagbox{198.4180}{\textbf{111.0972}}\cr\hline
    $\delta=0.01\%$&\diagbox{10857}{\textbf{6457}}&\diagbox{0.0020}{ \textbf{0.0020}}&\diagbox{868.6441}{\textbf{546.8676}}\cr\hline
    \end{tabular}
\end{table}

\begin{figure}[htbp]
\centering
\begin{minipage}[b]{0.425\textwidth}
\centering
\includegraphics[width=2.35in]{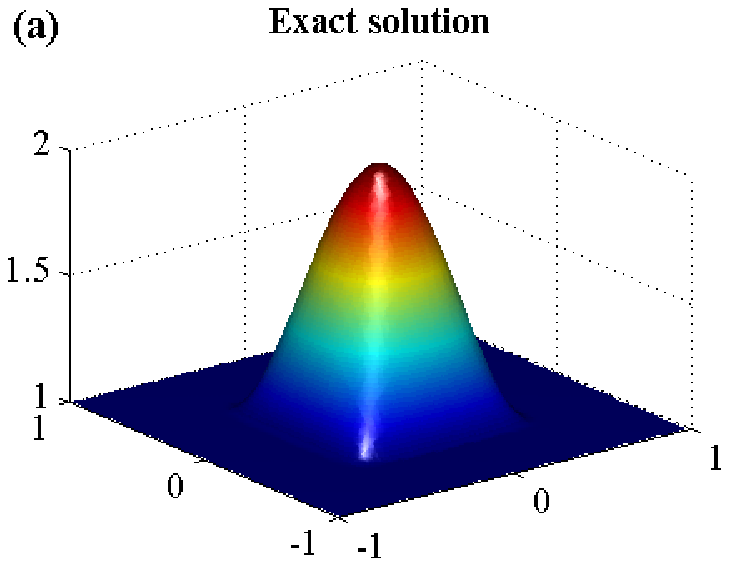}
\end{minipage}
\begin{minipage}[b]{0.425\textwidth}
\centering
\includegraphics[width=2.35in]{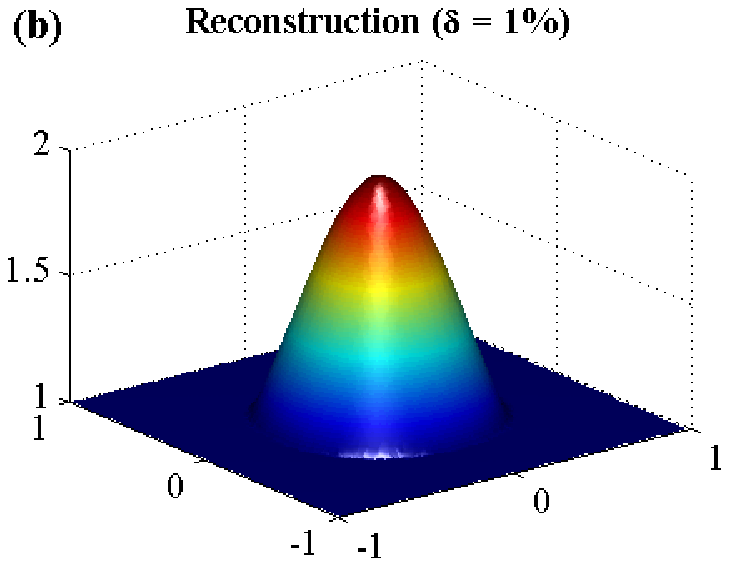}
\end{minipage}
\begin{minipage}[b]{0.425\textwidth}
\centering
\includegraphics[width=2.35in]{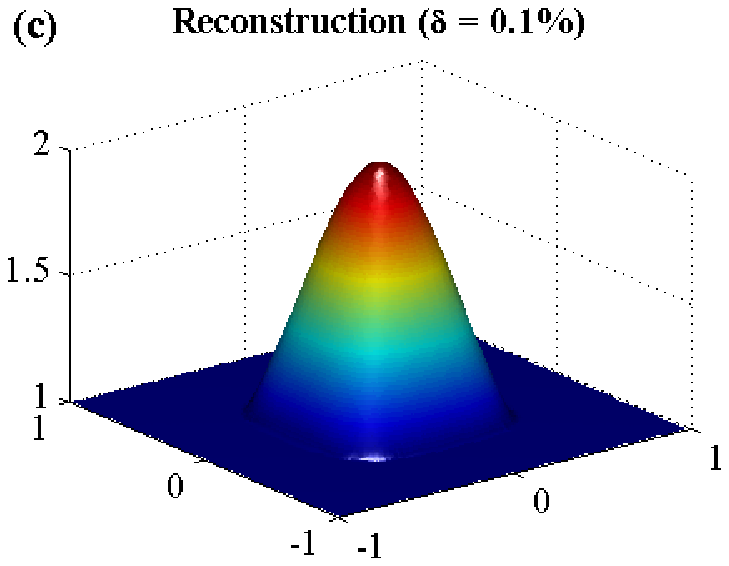}
\end{minipage}
\begin{minipage}[b]{0.425\textwidth}
\centering
\includegraphics[width=2.35in]{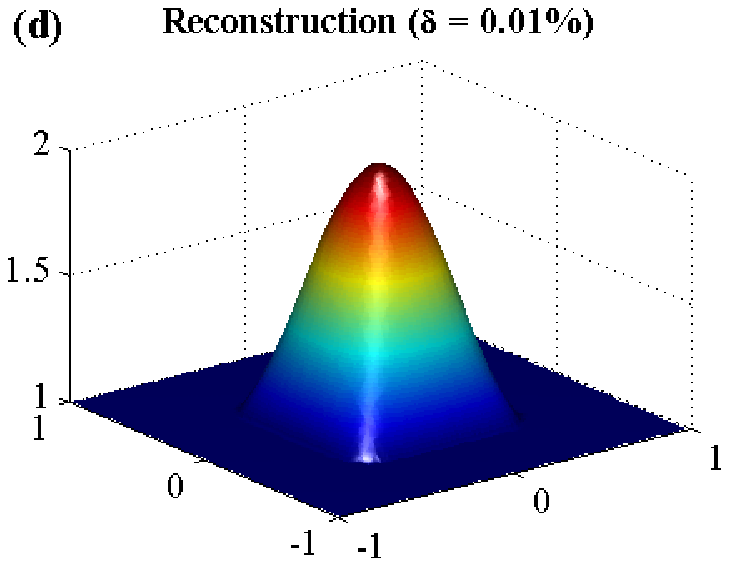}
\end{minipage}
\caption{Results for the 2d inverse potential problem by HPICP with $\beta=1$ at three different noise levels.}
\label{Figure 3}
\end{figure}

\section{Conclusion}
Motivated by chances of reducing numerical costs, this paper presented a novel iterative regularization approach with general uniformly convex penalty based on the homotopy perturbation technique for nonlinear ill-posed inverse problems in Banach spaces.
Convergence and regularization properties were shown, as well as some numerical examples were performed to illustrate the feasibility and effectiveness.
Compared with the existing Landweber iteration with general uniformly convex penalty, our approach reduces the overall computational time and improves the convergence rate.
How to extend the approach for a novel class of reconstruction schemes is the next work.

\section*{Acknowledgement}
The authors are grateful to Dr. Qinian Jin (Australian National University, Australia) for some useful comments.
The work of Jing Wang is supported by the National Natural Science Foundation of China (NSFC) [grant number 11626092], Wei Wang by NSFC [grant number 11401257], Bo Han by NSFC [grant number 41474102].

\section*{References}

\end{document}